%% ****** Start of file aiptemplate.tex ****** %
%%
%%   This file is part of the files in the distribution of AIP substyles for REVTeX4.
%%   Version 4.1 of 9 October 2009.
%%
%
% This is a template for producing documents for use with 
% the REVTEX 4.1 document class and the AIP substyles.
% 
% Copy this file to another name and then work on that file.
% That way, you always have this original template file to use.

\documentclass[11pt]{amsart}
\usepackage{graphicx, amsmath, fullpage, amssymb, amsthm, amsfonts, color}
\usepackage{enumerate}
\newtheorem{theorem}{Theorem}[section]
\newtheorem{lemma}[theorem]{Lemma}

\theoremstyle{definition}
\newtheorem{definition}[theorem]{Definition}

\newtheorem{cor}[theorem]{Corollary}
\usepackage[toc,page]{appendix} 

\theoremstyle{remark}
\newtheorem{remark}[theorem]{Remark}
\numberwithin{equation}{section}
\usepackage[colorlinks,
            linkcolor=red,
            anchorcolor=red,
            citecolor=blue
            ]{hyperref}
\begin{document}

% Use the \preprint command to place your local institutional report number 
% on the title page in preprint mode.
% Multiple \preprint commands are allowed.

\title{Sparse General Wigner-type Matrices: Local Law and Eigenvector Delocalization} %Title of paper

% repeat the \author .. \affiliation  etc. as needed
% \email, \thanks, \homepage, \altaffiliation all apply to the current author.
% Explanatory text should go in the []'s, 
% actual e-mail address or url should go in the {}'s for \email and \homepage.
% Please use the appropriate macro for the type of information

% \affiliation command applies to all authors since the last \affiliation command. 
% The \affiliation command should follow the other information.

\author{Ioana Dumitriu}
\email[]{dumitriu@uw.edu}
%\homepage[]{Your web page}
%\thanks{}
%\altaffiliation{}
\address{Department of Mathematics, University of Washington}

\author{Yizhe Zhu}
\email[]{yizhezhu@uw.edu}
%\homepage[]{Your web page}
%\thanks{}
%\altaffiliation{}
\address{Department of Mathematics, University of Washington}

% Collaboration name, if desired (requires use of superscriptaddress option in \documentclass). 
% \noaffiliation is required (may also be used with the \author command).
%\collaboration{}
%\noaffiliation

\date{\today}

\begin{abstract}
% insert abstract here
We prove a local law and eigenvector delocalization for general Wigner-type matrices. Our methods allow us to get the best
possible interval length and optimal eigenvector delocalization in the dense case, and the first results of
such kind for the sparse case down to $p=\frac{g(n)\log n}{n}$ with $g(n)\to\infty$. We specialize our results to
the case of the Stochastic Block Model, and we also obtain a local law
for the case when the number of classes is unbounded.
\end{abstract}

% insert suggested PACS numbers in braces on next line

\maketitle %\maketitle must follow title, authors, abstract and \pacs

% Body of paper goes here. Use proper sectioning commands. 
% References should be done using the \cite, \ref, and \label commands
\section{Introduction}

\subsection{The Stochastic Block Model}

The Stochastic Block Model (SBM), first introduced by mathematical
sociologists  \cite{HLL83}, is a widely used
random graph model for networks with communities. In the last decade,
there has been considerable activity \cite{coja2010graph,
krzakala2013spectral, abbe2016exact, abbe2015community,brito2016recovery,
BGBK1,BGBK2} in understanding the spectral
properties of matrices associated to the SBM and to other generalized
graph models, in particular in connection to
spectral clustering methods. 

Stochastic Block Models represent a generalization of Erd\H{o}s-R\'enyi graphs to allow for more heterogeneity. Roughly
speaking, an SBM graph starts with a partitioning of the vertices into
classes, followed by placing an Erd\H{o}s-R\'enyi graph on each class
(independent edges, each occurring with the same given probability
depending on the class), and connecting vertices in two different blocks by independent edges,
again with the same given probability which this time depends on the
pair of classes. The random matrix associated to
this graph is the adjacency matrix, which is a random block matrix
whose entries have Bernoulli distributions, the parameters of which are
dictated by the inter- and intra-block probabilities mentioned
above. 

Specifically, suppose for ease of numbering that $[n] = V_1 \cup V_2
\cup \ldots \cup V_d$ for some integer $d$, $|V_i|=N_i$   for $i = 1, \ldots, d$. Suppose that for any pair
$(k,l) \in [d] \times [d]$ with $k \neq l$ there is a $p_{kl} \in [0,1]$ such that for
any $i \in V_k$, $j \in V_l$, 
\[
a_{ij} = \left \{ \begin{array}{ll} 1,
    & \mbox{with probability } p_{kl}, \\
0, & \mbox{otherwise}. \end{array} \right . 
\]
Also, if $k=l$, there is a $p_{k}$ such that for any $i, j \in V_k$, 
\[
a_{ij} = \left \{ \begin{array}{ll} 0, & \mbox{if } i=j, \\
                    1, & \mbox{with probability $p_k$,} \\
                    0, & \mbox{otherwise}. \end{array} \right .
\]
Each diagonal block is an adjacency matrix of a simple  Erd\H{o}s-R\'enyi graph  and  off-diagonal blocks are adjacency matrix of bipartite graphs. 
While there is interest in studying the $O(1)$ variance case
(corresponding to all $p_{ij}$s and $p_{i}$s being $O(1)$; the
``dense'' case), special interest is given to the
sparse case (when $p_{ij}$s and $p_{i}$s are $o(1)$, and more
specifically, when the average vertex degrees, given by $np_{ij}$ as well as $np_i$, are growing very slowly
with $n$ or may even be large and constant).  

The adjacency matrices of SBM graphs are themselves a particular form of general
Wigner-type matrix, which have been shown to exhibit universal
properties in \cite{ajanki2015universality} in the dense case. We detail the connection to
the broader field of random matrix universality studies in the next
section. 

\subsection{Universality studies, general Wigner-type matrices, and
  related graph-based matrix models}

At the same time with the increased interest in the spectra of SBM, the universality studies in random matrix theory
pioneered by  \cite{tao2011random} and 
\cite{erdos2009} had been gaining ground at tremendous pace. The pioneering
work on Wigner matrices started in \cite{tao2011random} and \cite{erdos2009} has been now
extended to cover generalized Wigner matrices \cite{erdos2012bulk}, Erd\H{o}s-R\'enyi matrices (including
sparse ones) \cite{er_univer1, er_univer2, erdos2011universality}, and general Wigner-type matrices \cite{ajanki2015universality}. All
such studies start by proving a local law at ``optimal'' scale, that is, on
intervals of length $(\log n)^{\alpha}/n$ or $n^{1-\varepsilon}$, which
is necessary for the complicated machinery of either \cite{tao2011random} or
 \cite{erdHos2010bulk} to translate the local law into universality of
eigenstatistics on ``optimal-length'' intervals.

In this paper we prove a main theorem about (dense) generalized Wigner
matrices and then apply it to cover sparse generalized Wigner
matrices; finally, we show that our results translate to graph models
like the SBM with bounded or unbounded number of blocks. We provide below a brief review of universality studies related to graph-based models.

After the original work on Wigner matrices, the first step in the
direction of graph models, or graph-based matrices, came with \cite{tran2013sparse},
where the authors proved a local law for Erd\H{o}s-R\'enyi graphs.
Subsequently, \cite{er_univer1}, \cite{er_univer2} superseeded these results for the
slightly denser cases and showed bulk universality in a $p \gg n^{-1/3}$
regime. The sparsity of the model is important here, because it makes the problem more
difficult. 
x
In a more recent paper, \cite{huang2015bulk} refined the results of
\cite{er_univer1, er_univer2} and made
them applicable for $p \gg n^{-1+\varepsilon}$ for any (fixed)
$\varepsilon>0$. Subsequently, in a departure from studying adjacency
matrices, \cite{er_laplacians} proved bulk universality
for the eigenvalue statistics of Laplacian matrices of
Erd\H{o}s-R\'enyi graphs, in the regime when $p \gg  n^{-1+\varepsilon}$
for fixed $\varepsilon>0$.  Very recently, \cite{he2018local} proved a local law for the adjacency matrix of the  Erd\H{o}s-R\'enyi graph with $p\geq C\log n/n$ for some constant $C>0$.

Finally, \cite{adlam2015spectral} examined a large class of sparse random graph-based matrices (two-atom and three-atom
entry distributions), proved a local law up to intervals of length
$1/n^{1-\varepsilon}$ and deduced (by the same means employed in \cite{huang2015bulk})
a bulk universality theorem. This is different from our results, since
our sparse matrices have entries that are not necessarily atomic, but
come from the product of a Bernoulli variable and a potentially
continuous one (See Section \ref{SparseRM}); however, the \cite{adlam2015spectral} case does seem
to cover the sparse SBM model for $p\gg n^{-1+\varepsilon}$ . As an
interesting aside, in the general case there may not be an asymptotic global law (aka the
limiting empirical spectral distribution); the cases we study here (SBM with bounded and unbounded
number of classes) are specific enough that
we can also prove the asymptotic global law. However, as it turns out, in
the case of the SBM with an unbounded number of blocks, the prediction
in the local law must still be made using the $n$-step approximation
to the global law, not the global law itself, since convergence to the
global law is not uniform.

Some of the methods used for examining the universality of these graph
models rely on the work in \cite{ajanki2015universality, ajanki2015quadratic}, where a general
(dense) Wigner-type matrix model is considered and universality is
proved up to intervals of length $1/n^{1-\varepsilon}$. We will also
appeal to  \cite{ajanki2015quadratic}, since it will help establish the
existence of limiting distributions and the stability of their Stieltjes transforms. 

We should mention the significant body of literature that
deals with global limits for the empirical spectral distributions of block
matrices. Starting with the seminal work of Girko \cite{girko2001theory}, the topic
was treated in \cite{far2006spectra} and \cite{shlyakhtenko1998gaussian} from a free
probability perspective; more recently, \cite{avrachenkov2015spectral}
and \cite{ding2014spectral} have examined the topic again for finitely many blocks (a claim in
\cite{ding2014spectral} that the method extends to a growing number of blocks is
incorrect). The global law for stochastic block models with a growing number of blocks was derived in \cite{zhu2018graphon} via  graphon theory.

\subsection{This paper}
The main difference in the results streaming from the seminal works of 
\cite{tao2011random}, respectively, \cite{erdos2009}, is in the conditions imposed on the
matrix entries: the former approach to universality is based on the
``four moment match'' condition, but imposes relatively weak
conditions on the tails, while the latter works by imposing
stronger conditions on the tails. In later works, these stronger conditions have
included bounded moments \cite{erdos2012bulk, huang2015bulk,
  ajanki2015universality}. While the methods of
\cite{erdos2009} have been extended to increasingly more general matrix models,
the methods in \cite{tao2011random} have been used to focus on reaching the best (smallest) possible
interval lengths for the classical, Wigner case via methods whose
basis was set in \cite{vu2015random}. 

This paper bridges the two approaches to obtain a local law and
eigenvector delocalization for dense and sparse general Wigner-type matrices 
and for the SBM. 

Our main result is a local law in the bulk down to interval
length $\frac{CK^2\log n}{n}$ for general Wigner-type matrices (see Section \ref{GWM}) whose entries are compactly supported almost surely in $[-K,K]$, employing some of the ideas from \cite{tao2011random} and
\cite{vu2015random}. Our result is more refined than the one from
\cite{ajanki2015universality}, where the smallest interval length was
$O(1/n^{1-\varepsilon})$ and bounded moments were assumed. With
additional assumptions (either four-moment matching, as in the case of
\cite{tao2011random}, or finite moments, as in \cite{erdos2009} and subsequent works) universality
down to this smaller interval length should follow.

In addition to this main result, we also obtain the first local laws
for sparse general Wigner-type matrices (see Section \ref{SparseRM}), down to interval length  $\frac{CK^2\log n}{np}$. 
We specialize our results to sparse SBM with finite many blocks, where a limiting law exists. Finally, we extend these results to an unbounded
number of blocks for the SBM, under certain conditions (see Section
\ref{SparseSBM}).

It should be said that our local laws for sparse general Wigner-type matrices
are not sharp enough to yield universality, unless $p$ is $\omega(1/n^{\varepsilon})$ for any $\varepsilon>0$. This is an artifact of the
use of the methods of \cite{tao2011random} and is also observable in \cite{tran2013sparse}. It is to be expected that they can be refined (by us or by
other researchers) in the near future to a point where universality
can be deduced.

\section{General Wigner-type Matrices}\label{GWM}
Let $M_n:=(\xi_{ij})_{1\leq i,j\leq n}$ be a random Hermitian matrix with variance profile $S_n=(s_{ij})_{1\leq i,j\leq n}$ such that $\xi_{ij}, 1\leq i\leq j\leq n $ are independent with
\begin{align}
\mathbb E\xi_{ij}=0, \mathbb E|\xi_{ij}|^2=s_{ij},	\notag
\end{align}
and compactly supported almost surely, i.e. $|\xi_{ij}|\leq K$ for some $K=o\left(\sqrt{\frac{n}{\log n}}\right)$.

For the variance profile $S_n$, we assume 
\begin{align}
c\leq s_{ij}\leq 1	\notag
\end{align}
for some constant $c>0$. Note this is equivalent to $c\leq s_{ij}\leq C$ by scaling. Define $W_n:=\frac{M_n}{\sqrt n}$.
The Stieltjes transform of the empirical spectral distribution of $W_n$ is given by
\begin{align}
s_n(z):=\frac{1}{n}\textnormal{tr}(W_n-zI)^{-1}	.\notag
\end{align}

We will show that $s_n(z)$ can be approximated by the solution of the following quadratic vector equation studied in \cite{ajanki2015quadratic}:
\begin{align}
m_n(z)&=\frac{1}{n}\sum_{k=1}^n g_n^{(k)}(z)	,\label{gwigner}\\
-\frac{1}{g_n^{(k)}(z)}&=z+\frac{1}{n}\sum_{l=1}^n s_{kl}g_n^{(l)}(z),  \quad 1\leq k\leq n. \label{gwignerqve}
\end{align}

From Theorem 2.1 in \cite{ajanki2015quadratic}, equation \eqref{gwignerqve}  has a unique set of solutions  $g_n^{(k)}(z): \mathbb H\to \mathbb H, 1\leq k\leq n$, which  are analytic functions on the complex upper half plane $\mathbb H:=\{z\in \mathbb C: \textnormal{Im}(z)>0\}$. The  unique solution $m_n(z)$ in equation \eqref{gwigner} is the Stieltjes transform of a probability measure $\rho_n$ with $\textnormal{supp}(\rho_n)\subset[-2,2]$ such that
\begin{align}
	\rho_n(x):=\lim_{\eta\downarrow 0} \frac{1}{\pi}\textnormal{Im} (m_n(x+i\eta)).\notag
\end{align} We use the following definition for bulk intervals of $\rho_n$.
\begin{definition}
	An interval $I$ of a probability density function $\rho$ on $\mathbb R$ is a bulk interval if there exists some fixed $\varepsilon>0$ such that 
	$\rho(x)\geq \varepsilon$, for any $x\in I$.
\end{definition}
	
We obtain the following local law of $M_n$ in the bulk.	

\begin{theorem}\label{locallawgwig}
\textnormal{(Local law  in the bulk)}
Let $M_n$ be a general Wigner-type matrix and $\rho_n$ be the probability measure corresponding to equations \eqref{gwigner},\eqref{gwignerqve}.
	 For any constant $\delta,C_1$, there exists a constant $C_2>0$ such that  with probability at least $1-n^{-C_1}$, the following holds.   For any  bulk interval $I$ of length $|I|\geq  \frac{C_2K^2\log n}{n}$ ,   the number of eigenvalues $N_I$ of $W_n$ in $I$ obeys the concentration estimate
	\begin{align}\label{result}
	 \left |N_I-n\int_I \rho_n(x)dx \right|\leq \delta n |I|.	\end{align} 	\end{theorem}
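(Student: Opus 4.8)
The plan is to reduce the counting estimate \eqref{result} to a bulk comparison of the Stieltjes transform $s_n(z)$ with $m_n(z)$, and then to establish that comparison through a self-consistent (resolvent) analysis in the spirit of \cite{tao2011random} and \cite{tran2013sparse}. For the reduction, it suffices to prove that, with probability at least $1-n^{-C_1}$,
\[
|s_n(E+i\eta)-m_n(E+i\eta)|\le\delta'
\]
for all $E$ in a fixed neighborhood of the bulk intervals under consideration and all $\eta$ with $C_2K^2\log n/n\le\eta\le 1$, where $\delta'=\delta'(\delta)$ is chosen small. Indeed, $\operatorname{Im} s_n(E+i\eta)$ is the Poisson-kernel smoothing of the eigenvalue counting measure of $W_n$ at scale $\eta$, while $\operatorname{Im} m_n$ is the same smoothing of $n\,\rho_n$; since $\rho_n$ is supported in $[-2,2]$ and is $O(1)$ on the bulk, a standard argument --- the bound $N_{[E-\eta,E+\eta]}\le Cn\eta\operatorname{Im} s_n(E+i\eta)$ together with a matching lower bound, or equivalently a Helffer--Sjöstrand representation of a smoothed indicator of $I$ with contour integral over $\operatorname{Im} z\ge|I|$ --- upgrades the display above (taken at $\eta\asymp|I|$) to $|N_I-n\int_I\rho_n|\le\delta n|I|$. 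Passing from a polynomial net of $(E,\eta)$ and of endpoints of $I$ to all values is routine from the $\eta^{-1}$-Lipschitz continuity of $s_n$ and $m_n$.

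For the Stieltjes transform comparison, let $G(z)=(W_n-z)^{-1}$ and let $G^{(k)}(z)$ be the resolvent of the minor obtained by deleting row and column $k$. The Schur complement formula gives
\[
-\frac{1}{G_{kk}(z)}=z+\frac1n\sum_{l\ne k}s_{kl}G^{(k)}_{ll}(z)+\Upsilon_k(z),
\qquad
\Upsilon_k:=\mathbf w_k^*G^{(k)}\mathbf w_k-\frac1n\sum_{l\ne k}s_{kl}G^{(k)}_{ll}-(W_n)_{kk},
\]
where $\mathbf w_k$ is the $k$-th column of $W_n$ off the diagonal. By eigenvalue interlacing, $|\frac1n\sum_{l}s_{kl}(G^{(k)}_{ll}-G_{ll})|\le C/(n\eta)$, so $(G_{kk}(z))_{k}$ solves \eqref{gwignerqve} up to the error $d_k:=\Upsilon_k+O(1/(n\eta))$. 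The key estimate is a large-deviation bound for $\Upsilon_k$: conditionally on $G^{(k)}$ it is a quadratic form in the independent variables $(W_n)_{kl}$, which are bounded by $K/\sqrt n$ and have variance $s_{kl}/n$, with coefficient matrix $G^{(k)}$ obeying $\|G^{(k)}\|\le 1/\eta$ and, by the Ward identity, $\frac1{n^2}\|G^{(k)}\|_{HS}^2=\frac{\operatorname{Im}\operatorname{tr}G^{(k)}}{n^2\eta}\le C/(n\eta)$ on the bulk. A Bernstein--Hanson--Wright estimate for quadratic forms in bounded variables then yields, with probability at least $1-n^{-C_1-1}$,
\[
|\Upsilon_k(z)|\le C\left(\sqrt{\frac{\log n}{n\eta}}+\frac{K^2\log n}{n\eta}\right)+\frac{K}{\sqrt n},
\]
which is at most $\delta'$ precisely when $\eta\ge C_2K^2\log n/n$ (using also $K=o(\sqrt{n/\log n})$); this is exactly what forces the interval length in the theorem.

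With $\max_k|d_k(z)|$ small and $G_{kk}(z)\in\mathbb H$, the stability theory of the quadratic vector equation \eqref{gwignerqve} from \cite{ajanki2015quadratic} applies on the bulk --- where $\rho_n\ge\varepsilon$, so the associated stability operator is uniformly invertible --- and gives $\max_k|G_{kk}(z)-g_n^{(k)}(z)|\le C\max_k|d_k(z)|$, hence $|s_n(z)-m_n(z)|\le C\max_k|d_k(z)|$. Because both this stability estimate and the Ward-identity bound above require an a priori smallness (e.g.\ $\operatorname{Im} s_n=O(1)$), I would run a continuity argument in $\eta$: at $\eta_0\asymp 1$ the bounds $|G_{kk}|\le\eta_0^{-1}$ and $|s_n-m_n|\le\delta'$ hold by the same computation with trivial concentration; then decrease $\eta$ along a fine grid, using the bound at the previous scale as the a priori input for the next, with $\eta^{-1}$-Lipschitz continuity bridging the grid. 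A union bound over the polynomially many grid points, over the endpoints of $I$, and over $k$, combined with the reduction above, produces the probability $1-n^{-C_1}$.

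The main obstacle is the quadratic-form concentration with the \emph{sharp} dependence on $K$: one needs a Hanson--Wright-type inequality valid for entries that are merely bounded rather than sub-Gaussian, whose error term matches $K^2\log n/(n\eta)$, so that the threshold is exactly $C_2K^2\log n/n$ and not a larger scale. A secondary difficulty is making the quadratic vector equation stability quantitative and uniform --- over the bulk and over the whole range of $\eta$ in the continuity argument --- and in particular ruling out that the accumulated errors $d_k$ could drive the approximate solution into a regime where the stability operator degenerates; this is where the results of \cite{ajanki2015quadratic} on the stability of the QVE and its Stieltjes transform are used.
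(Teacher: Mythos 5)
Your proposal follows essentially the same route as the paper: Schur complement to obtain a perturbed quadratic vector equation for the diagonal resolvent entries, a concentration estimate for the fluctuation, the Ajanki--Erd\H{o}s--Kr\"uger stability theory for the QVE in the bulk, a continuity argument in $z$, and the standard reduction from $|s_n-m_n|$ small to the counting estimate. The ``main obstacle'' you flag --- a quadratic-form concentration whose Gaussian tail is governed by the true entry variances $s_{kl}/n$ rather than by the worst-case sub-Gaussian norm $K/\sqrt{n}$, so that the threshold is $K^2\log n/n$ and not $K^4\log n/n$ --- is exactly what the paper's Lemma~\ref{same} (a weighted projection lemma adapting Vu--Wang) supplies, with the Bernstein-shaped bound \eqref{252}; the a priori smallness your Ward-identity step needs is furnished in the paper by a separate rough eigenvalue count (Lemma~\ref{prelim}) rather than by bootstrapping in $\eta$, but both mechanisms serve the same purpose. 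So the proposal is sound and the architecture coincides with the paper's; the only deviations are which off-the-shelf concentration tool is invoked and whether the a priori input is proved directly or by self-improvement in $\eta$.
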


As a consequence, we obtained an optimal upper bound for eigenvectors that corresponds to eigenvalues of $W_n$ in the bulk interval.

\begin{theorem}\textnormal{(Optimal delocalization of eigenvectors in the bulk)}\label{deloc}
Let $M_n$ be a general Wigner-type matrix. For any constant $C_1>0$ and  any bulk interval $I$  such that eigenvalue $\lambda_i(W_n)\in I$, with probability at least $1-n^{-C_1}$, there is a constant $C_2$ such that the corresponding unit eigenvector $u_i(W_n)$ satisfies 
\begin{align}\|u_i(W_n)\|_{\infty}\leq \frac{C_2K\log^{1/2} n}{\sqrt n}.	\notag
\end{align}
\end{theorem}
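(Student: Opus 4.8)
The plan is to deduce Theorem~\ref{deloc} from the local law (Theorem~\ref{locallawgwig}) by the standard argument that relates eigenvector mass to the imaginary part of a diagonal resolvent entry, combined with a one-step eigenvalue/eigenvector removal estimate. Fix a bulk interval $I$ and an index $i$ with $\lambda_i:=\lambda_i(W_n)\in I$, and write $u_i$ for the associated unit eigenvector. The first step is the resolvent identity: for any $z=E+i\eta$ with $E=\lambda_i$ and $\eta$ to be chosen of order $(\log n)/n$ (more precisely $\eta = C_2 K^2 \log n/n$), one has
\begin{align}
\operatorname{Im} G_{kk}(z) = \sum_{j=1}^n \frac{\eta\,|u_j(k)|^2}{(\lambda_j-E)^2+\eta^2} \geq \frac{|u_i(k)|^2}{\eta},\notag
\end{align}
where $G(z)=(W_n-zI)^{-1}$ and $u_j(k)$ is the $k$-th coordinate of the $j$-th eigenvector. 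Hence $\|u_i\|_\infty^2 \leq \eta \max_k \operatorname{Im} G_{kk}(z)$, and it suffices to show $\max_k \operatorname{Im} G_{kk}(z) = O(1)$ with high probability at this scale $\eta$.

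The second step is therefore to establish an entrywise (or at least diagonal) local law for the resolvent $G_{kk}(z)$ at spectral scale $\eta \asymp K^2\log n/n$ inside the bulk, showing $G_{kk}(z)$ is close to the deterministic quantity $g_n^{(k)}(z)$ solving \eqref{gwignerqve}, and in particular $\operatorname{Im} G_{kk}(z)=O(1)$. This is where I would reuse the machinery behind Theorem~\ref{locallawgwig}: the counting estimate \eqref{result} already controls $\operatorname{Im} s_n(z) = \frac1n \sum_k \operatorname{Im} G_{kk}(z)$ for $E$ in the bulk and $\eta \gtrsim K^2\log n/n$, since $\operatorname{Im} s_n(E+i\eta) = \int \frac{\eta}{(x-E)^2+\eta^2}\,d\mu_{W_n}(x)$ and the integral can be sliced dyadically into intervals around $E$ whose eigenvalue counts are controlled by \eqref{result}, giving $\operatorname{Im} s_n(z) = O(1)$. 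Upgrading this averaged bound to a uniform-in-$k$ bound on $\operatorname{Im} G_{kk}$ is the content of the diagonal local law; in the Tao--Vu style framework this follows from the resolvent expansion $G_{kk} = (\,W_{kk} - z - \mathbf{a}_k^* G^{(k)} \mathbf{a}_k\,)^{-1}$, a large-deviation concentration for the quadratic form $\mathbf{a}_k^* G^{(k)} \mathbf{a}_k$ around $\frac1n\sum_l s_{kl} G^{(k)}_{ll}$ (which uses the compact support $|\xi_{ij}|\le K$ to get the $K$-dependence), and the stability of the quadratic vector equation \eqref{gwignerqve} proved in \cite{ajanki2015quadratic}. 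One must run this as a bootstrap from larger $\eta$ down to $\eta \asymp K^2\log n/n$, exactly paralleling the proof of Theorem~\ref{locallawgwig}.

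The third step is bookkeeping: take a union bound over the $O(n)$ indices $k$ and over a polynomially fine net of energies $E$ in $I$ (resolvent entries are Lipschitz in $E$ at scale $\eta$ with a polynomial constant, so a net of size $n^{O(1)}$ suffices), absorbing the resulting $n^{O(1)}$ factor into the probability $1-n^{-C_1}$ by enlarging the constant in $\eta$ and in the failure exponent; then specialize $E$ to the actual eigenvalue $\lambda_i\in I$. Combining $\|u_i\|_\infty^2 \le \eta \max_k \operatorname{Im} G_{kk}(E+i\eta) \le C\eta = C' K^2 \log n/n$ and taking square roots yields $\|u_i\|_\infty \le C_2 K (\log n)^{1/2}/\sqrt n$, as claimed.

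I expect the main obstacle to be the second step: obtaining the \emph{uniform-in-$k$} control of $\operatorname{Im} G_{kk}$ down to the optimal scale $\eta \asymp K^2\log n/n$ with the correct $K$-dependence. The averaged bound on $\operatorname{Im} s_n$ is essentially free from Theorem~\ref{locallawgwig}, but de-averaging requires the full self-consistent/fixed-point analysis of \eqref{gwignerqve}: one needs the concentration of the quadratic forms $\mathbf{a}_k^* G^{(k)} \mathbf{a}_k$ to hold simultaneously for all $k$ with only logarithmic loss, which forces the $K^2\log n$ factor in the scale, and one needs the stability bound for the vector Dyson equation to be uniform over the bulk so that small errors in each component equation do not blow up. Keeping the dependence on $K$ explicit and avoiding any hidden $n^{\varepsilon}$ losses (which would spoil the optimality claimed in the theorem) throughout this bootstrap is the delicate part; everything else is standard.
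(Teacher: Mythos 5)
Your proposal is correct in spirit but takes a genuinely different route from the paper. The paper does not go through $\operatorname{Im} G_{kk}$ at all: it instead invokes the exact Tao--Vu identity (Lemma~\ref{lemm29}) expressing $|u_i(k)|^2$ as $\bigl(1+\sum_{j}(\lambda_j(W_{n,k})-\lambda_i)^{-2}|u_j(W_{n,k})^*a_k|^2\bigr)^{-1}$, uses Theorem~\ref{locallawgwig} plus Cauchy interlacing to produce a set $J$ of $\gtrsim n|I|$ eigenvalues of the minor $W_{n,k}$ within an interval $I$ of length $\asymp K^2\log n/n$ around $\lambda_i$, and then lower-bounds $\sum_{j\in J}|u_j(W_{n,k})^*a_k|^2$ directly via the weighted-projection concentration Lemma~\ref{same}. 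That yields $|u_i(k)|^2\lesssim |I|$ without any de-averaging step. Your route (bound $|u_i(k)|^2\le \eta\,\operatorname{Im}G_{kk}(\lambda_i+i\eta)$ and then control $\operatorname{Im}G_{kk}$) is the standard EKYY-style argument and is also available here; in fact the obstacle you flag at the end --- getting a uniform-in-$k$ diagonal resolvent bound at scale $\eta\asymp K^2\log n/n$ --- is already resolved by the paper as Lemma~\ref{qvelemma1}: note that $f_n^{(k)}(z)$ defined in \eqref{fqve} \emph{is} $G_{kk}(z)$ (by Schur's complement), so Lemma~\ref{qvelemma1} together with the boundedness $|g_n^{(k)}(z)|\le 1/\varepsilon$ in the bulk (Lemma~\ref{QVEstab}'s proof) gives $\max_k\operatorname{Im}G_{kk}=O(1)$ uniformly over $z\in D_{n,\varepsilon}$, which absorbs your net argument for free. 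So your proof would close once you observe this identification; what it buys is a more modular deduction (delocalization directly from the already-proved diagonal local law), whereas the paper's approach is slightly more self-contained and avoids restating an entrywise law, trading it for the Hanson--Wright-type concentration Lemma~\ref{same} applied to the minor's eigenvectors.
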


\begin{remark}
Theorem \ref{locallawgwig} and Theorem \ref{deloc} also hold for general Wigner-type matrix whose entries $\xi_{ij}$'s  are sub-gaussian	 with sub-gaussian norm bounded by $K$. As mentioned in Remark 4.2 in \cite{o2016eigenvectors},  the proof follows in the same way by using the inequality in Theorem 2.1 in \cite{rudelson2013hanson} for sub-gaussian concentration instead of Lemma 1.2 in \cite{vu2015random} for $K$-bounded entries.
\end{remark}

We use standard methods from \cite{tao2011random}, adapted to fit the model considered here.

\subsection{Proof of Main Results}
\subsubsection{Proof of Theorem \ref{locallawgwig}}
For any $0<\varepsilon<\frac{1}{2}$, and constant $C_1>0$, define a region
\begin{align}\label{DefofC3}
D_{n,\varepsilon}:=\{z\in \mathbb C: \rho_n(\textnormal{Re}(z))\geq \varepsilon, \textnormal{Im}(z)\geq \frac{C_3^2K^2\log n}{n\delta^6}\}	
\end{align}
for some constant $C_3>0$ to be decided later.

 Let  $W_{n,k}$ be  the matrix $W_n$ with the $k$-th row and column removed, and $a_k$ be the $k$-th row of $W_n$ with the $k$-th element removed. 
 
 Let $(W_{n}-zI)^{-1}:=(q_{ij}^{(n)})_{1\leq i,j\leq n}$. From Schur's complement lemma (Theorem A.4 in \cite{bai2010spectral}), we have 
\begin{align}
q_{kk}=	\frac{1}{-\frac{\xi_{kk}}{\sqrt{n}} -z-Y_k},\notag
\end{align}
where 
\begin{align}
Y_k=a_k^*(W_{n,k}-zI)^{-1}a_k.	\notag
\end{align}
Let 
\begin{align}\label{fqve}
f_n^{(k)}(z):=\frac{1}{-\frac{\xi_{kk}}{\sqrt{n}} -z-Y_k},
\end{align}
then we can write $s_n(z)$ as,
 \begin{align}\label{strans}
s_n(z)=\frac{1}{n}\textnormal{tr}(W_n-zI)^{-1}=\frac{1}{n}\sum_{k=1}^n f_n^{(k)}(z).
            \end{align}
 We first estimate $Y_k$ to derive a perturbed version of \eqref{gwignerqve}.  Let $(W_{n,k}-zI)^{-1}:=(q_{ij}^{(n,k)})_{1\leq i,j\leq n-1}$, 
and $S_n^{(k)}$ be a diagonal matrix whose diagonal elements are the $k$-th row of $S_n$ with the $k$-th entry removed. We have
\begin{align}\label{expectedY}
\mathbb E[Y_k|W_{n,k}]&=\mathbb E[a_k^*(W_{n,k}-zI)^{-1}a_k|W_{n,k}]\notag\\
                      &=\sum_{i=1}^{n-1} q_{ii}^{(n,k)}\mathbb E|a_{ki}|^2 \notag\\
                      &=\sum_{i=1}^{n-1} q_{ii}^{(n,k)}s_{ki}                     \notag\\
             &=\frac{1}{n}\textnormal{tr}[(W_{n,k}-zI)^{-1}S_n^{(k)}].
 \end{align}
 The following 2 lemmas give estimates for $Y_k$, and the proofs are deferred for the next section.
 \begin{lemma}\label{girkoq} Let $\Sigma_n^{(k)}$ be the diagonal matrix whose diagonal elements are the $k$-th row of $S_n$. For any $k, 1\leq k\leq n$, and any fixed $z$ with $\textnormal{Im}(z)\geq\frac{K^2C_3^2\log n}{n\delta^6}$,
	\begin{align} \mathbb E[Y_k|W_{n,k}]=\frac{1}{n}\textnormal{tr}[(W_{n}-zI)^{-1}\Sigma_n^{(k)}]+O\left(\frac{1}{n\eta}\right),
	\notag
	\end{align}where the constant in the $O\left(\frac{1}{n\eta}\right)$ term is independent of $z$.

\end{lemma}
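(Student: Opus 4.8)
The plan is to prove the lemma by a direct resolvent comparison, reducing everything to diagonal entries of $G:=(W_n-zI)^{-1}=(q_{ij}^{(n)})$ and $G^{(k)}:=(W_{n,k}-zI)^{-1}=(q_{ij}^{(n,k)})$. Since $S_n^{(k)}$ and $\Sigma_n^{(k)}$ are diagonal with entries $s_{ki}$, the identity already obtained in \eqref{expectedY} reads $\mathbb E[Y_k|W_{n,k}]=\frac1n\sum_{i\neq k}q_{ii}^{(n,k)}s_{ki}$, while $\frac1n\textnormal{tr}[(W_n-zI)^{-1}\Sigma_n^{(k)}]=\frac1n\sum_{i\neq k}q_{ii}^{(n)}s_{ki}+\frac1n q_{kk}^{(n)}s_{kk}$ (here and below we identify the index set of $W_{n,k}$ with $[n]\setminus\{k\}$). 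Writing $\eta:=\textnormal{Im}(z)$, it therefore suffices to bound the off-diagonal correction $\frac1n\sum_{i\neq k}(q_{ii}^{(n,k)}-q_{ii}^{(n)})s_{ki}$ and the single leftover term $\frac1n q_{kk}^{(n)}s_{kk}$, each by $O(1/(n\eta))$.

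The leftover term is $O(1/(n\eta))$ at once, since $|q_{kk}^{(n)}|\leq 1/\eta$ and $0\leq s_{kk}\leq 1$. For the correction term I would use the standard minor version of Schur's complement: for $i\neq k$,
\begin{align}
q_{ii}^{(n,k)}=q_{ii}^{(n)}-\frac{q_{ik}^{(n)}q_{ki}^{(n)}}{q_{kk}^{(n)}}=q_{ii}^{(n)}-\frac{|q_{ik}^{(n)}|^2}{q_{kk}^{(n)}},\notag
\end{align}
the last step using that $W_n$ is Hermitian, so $q_{ki}^{(n)}=\overline{q_{ik}^{(n)}}$ (and $q_{kk}^{(n)}\neq 0$ because $\textnormal{Im}\,q_{kk}^{(n)}>0$ for $z\in\mathbb H$). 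Combined with $|s_{ki}|\leq 1$ this yields
\begin{align}
\left|\frac1n\sum_{i\neq k}(q_{ii}^{(n,k)}-q_{ii}^{(n)})s_{ki}\right|\leq\frac{1}{n|q_{kk}^{(n)}|}\sum_{i=1}^n|q_{ik}^{(n)}|^2.\notag
\end{align}

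The heart of the estimate is then the Ward identity
\begin{align}
\sum_{i=1}^n|q_{ik}^{(n)}|^2=\big[(W_n-zI)^{-1}(W_n-\bar zI)^{-1}\big]_{kk}=\frac{\textnormal{Im}\,q_{kk}^{(n)}}{\eta},\notag
\end{align}
which follows from the resolvent identity applied to the pair of poles $z$ and $\bar z$. Since $0<\textnormal{Im}\,q_{kk}^{(n)}\leq|q_{kk}^{(n)}|$ for $z\in\mathbb H$, the right-hand side of the previous display is at most $\frac1{n\eta}$, and the lemma follows with an absolute implied constant; in particular it does not depend on $z$, since every inequality used ($|q_{kk}^{(n)}|\leq1/\eta$, the Ward identity, $|s_{ij}|\leq1$) is $z$-free. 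Note that the hypothesis $\textnormal{Im}(z)\geq K^2C_3^2\log n/(n\delta^6)$ is not actually needed for this lemma; it merely delimits the regime in which the error $O(1/(n\eta))$ will be negligible in the later arguments.

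I do not expect a genuine obstacle here: the argument is a routine resolvent manipulation. The only two points requiring care are the dimension bookkeeping (the diagonal entry $s_{kk}$ of $\Sigma_n^{(k)}$ has no analogue in $S_n^{(k)}$, so it must be peeled off and bounded by hand) and checking that the implied constant is honestly $z$-independent, which is why I would keep every bound in the explicit form above rather than absorb factors into $O(\cdot)$ prematurely.
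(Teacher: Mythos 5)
Your proof is correct in substance, but it takes a genuinely different route from the paper. The paper proves this lemma by invoking Girko's Lemma 1.1 (Lemma \ref{girkolemma1}): it introduces an auxiliary random vector $\vec{c}$ independent of $W_n$ with $\mathrm{Var}(c_i)=s_{ki}$, applies the Girko difference formula to the quadratic forms $\vec{c}^T(W_n-zI)^{-1}\vec{c}$ and $\vec{c}_k^T(W_{n,k}-zI)^{-1}\vec{c}_k$, takes $\mathbb E[\,\cdot\mid W_n]$ over $\vec{c}$ to convert quadratic forms into weighted traces, and finally bounds the resulting ratio $\frac{s_{kk}+a_k^*R_k S_n^{(k)}R_k a_k}{\frac{\xi_{kk}}{\sqrt n}-z-Y_k}$ by $1/\eta$ using the imaginary-part lower bound on the denominator. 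Your route bypasses the auxiliary random vector entirely: you use the standard minor resolvent identity $q_{ii}^{(n,k)}=q_{ii}^{(n)}-q_{ik}^{(n)}q_{ki}^{(n)}/q_{kk}^{(n)}$ plus the Ward identity. This is cleaner and more in the style of the modern Erd\H{o}s--Yau toolkit; it also makes the $z$-independence of the implicit constant manifest in a way the paper's proof only implicitly asserts. The paper's device, by contrast, is more natural if one is thinking of $\mathbb E[Y_k\mid W_{n,k}]$ as a weighted trace from the start and wants to perturb the quadratic form rather than the resolvent.

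One small technical point to tighten: you justify $q_{ki}^{(n)}=\overline{q_{ik}^{(n)}}$ by Hermiticity of $W_n$, but for $\textnormal{Im}\,z>0$ the resolvent $G(z)=(W_n-zI)^{-1}$ satisfies $G(z)^*=G(\bar z)$, not $G(z)^*=G(z)$, so $G(z)$ is \emph{not} Hermitian and that identity fails when the entries of $W_n$ are genuinely complex. It does hold when $W_n$ is real symmetric (then $G$ is complex symmetric, $q_{ik}=q_{ki}$, and $|q_{ik}q_{ki}|=|q_{ik}|^2$), which covers the SBM applications, but the lemma is stated for general Hermitian $W_n$. The fix is trivial: bound $|q_{ik}^{(n)}q_{ki}^{(n)}|\le\tfrac12(|q_{ik}^{(n)}|^2+|q_{ki}^{(n)}|^2)$ and observe that both row and column sums obey the same Ward bound because $G$ is normal (it commutes with $G^*$, being a function of the Hermitian $W_n$), so $\sum_i|q_{ik}^{(n)}|^2=\sum_i|q_{ki}^{(n)}|^2=\textnormal{Im}\,q_{kk}^{(n)}/\eta$. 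With that patch the argument is airtight.
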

A similar estimate holds for $Y_k$ itself.
\begin{lemma} \label{weightedprojectionq}For any constant $C>0$, one can choose the constant $C_3$ defined in \eqref{DefofC3} sufficiently large   such that for any $k,1\leq k\leq n$, $z\in D_{n,\varepsilon}$,  one has
	\begin{align} \label{traceyk1}
	 Y_k-\frac{1}{n}\textnormal{tr}[(W_{n}-zI)^{-1}\Sigma_n^{(k)}]= o(\delta^2) 	
	\end{align}
	 with probability at least $1-n^{-C-10}$. 
\end{lemma}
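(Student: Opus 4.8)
The plan is to split the quantity in \eqref{traceyk1} as
\[
Y_k-\tfrac1n\operatorname{tr}\bigl[(W_{n}-zI)^{-1}\Sigma_n^{(k)}\bigr]
=\underbrace{\bigl(Y_k-\mathbb E[Y_k\mid W_{n,k}]\bigr)}_{(\mathrm{I})}
\;+\;\underbrace{\Bigl(\mathbb E[Y_k\mid W_{n,k}]-\tfrac1n\operatorname{tr}\bigl[(W_{n}-zI)^{-1}\Sigma_n^{(k)}\bigr]\Bigr)}_{(\mathrm{II})}.
\]
Term $(\mathrm{II})$ is deterministic given $W_{n,k}$, and by Lemma~\ref{girkoq} it is $O(1/(n\eta))$ with an absolute implied constant. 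Since $z\in D_{n,\varepsilon}$ forces $n\eta\ge C_3^2K^2\log n/\delta^6\ge C_3^2c\log n/\delta^6$ (using $K^2\ge c$), we get $|(\mathrm{II})|\le \delta^6/(cC_3^2\log n)$, which is far below any fixed multiple of $\delta^2$ and shrinks further as $C_3$ grows. So everything reduces to controlling the fluctuation $(\mathrm{I})$ with probability at least $1-n^{-C-10}$.

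For $(\mathrm{I})$, condition on $W_{n,k}$ and write $Y_k=a_k^{*}Aa_k$ with $A:=(W_{n,k}-zI)^{-1}$, where $a_k=(\xi_{ki}/\sqrt n)_{i\ne k}$ has independent entries with $\mathbb E a_{ki}=0$, $\mathbb E|a_{ki}|^2=s_{ki}/n\in[c/n,1/n]$ and $|a_{ki}|\le K/\sqrt n$. After rescaling the entries to unit variance (which multiplies the almost-sure bound by at most $c^{-1/2}$ and replaces $A$ by $DAD$ with $D$ a diagonal contraction, hence leaves $\|A\|$ and $\|A\|_{HS}$ non-increasing), the Hanson--Wright-type concentration inequality for quadratic forms in bounded independent random variables — Lemma~1.2 of \cite{vu2015random} — applies conditionally on $W_{n,k}$. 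Choosing its deviation parameter so the failure probability is at most $n^{-C-10}$ gives, for a constant $C'$ depending only on $C$ and $c$,
\[
\bigl|\,Y_k-\mathbb E[Y_k\mid W_{n,k}]\,\bigr|\;\le\;\frac{C'}{n}\Bigl(K^2(\log n)\,\|A\|+K\sqrt{\log n}\,\|A\|_{HS}\Bigr)
\]
with (conditional, hence unconditional) probability at least $1-n^{-C-10}$.

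It remains to bound the two norms for $z$ in the bulk. The operator norm is immediate, $\|A\|\le 1/\eta$. For the Hilbert--Schmidt norm we use the identity
\[
\|A\|_{HS}^2=\sum_i\frac{1}{|\lambda_i(W_{n,k})-z|^2}=\frac1\eta\,\operatorname{Im}\operatorname{tr}(W_{n,k}-zI)^{-1},
\]
so that $\|A\|_{HS}^2=O(n/\eta)$ \emph{provided} $\tfrac1n\operatorname{Im}\operatorname{tr}(W_{n,k}-zI)^{-1}=O(1)$ on the bulk. This a priori estimate is the crux: it is equivalent to the statement that $W_{n,k}$ has $O(n\eta)$ eigenvalues in the window $[\operatorname{Re}(z)-\eta,\operatorname{Re}(z)+\eta]$, which is a consequence of the local law being proved rather than an input to it, and so it must be supplied through the continuity-in-$\eta$ structure of the Tao--Vu scheme: it holds trivially at $\eta\asymp1$ (where moreover $\operatorname{Im} m_n$ is bounded on compacts by \cite{ajanki2015quadratic}) and is carried down to the scale $\eta\ge C_3^2K^2\log n/(n\delta^6)$ together with the local law itself, using eigenvalue interlacing, $|\operatorname{tr}(W_{n,k}-zI)^{-1}-\operatorname{tr}(W_{n}-zI)^{-1}|\le C/\eta$, and the stability of the quadratic vector equation \eqref{gwigner}--\eqref{gwignerqve} established in \cite{ajanki2015quadratic}. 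I expect making this bootstrap self-consistent — so that the $O(1)$ bound on the normalized trace is genuinely available at the scale of $D_{n,\varepsilon}$ when this lemma is invoked — to be the main obstacle.

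Granting $\|A\|_{HS}^2\le C''n/\eta$ and inserting $\eta\ge C_3^2K^2\log n/(n\delta^6)$ gives
\[
\frac{K^2\log n}{n}\,\|A\|\le\frac{K^2\log n}{n\eta}\le\frac{\delta^6}{C_3^2},
\qquad
\frac{K\sqrt{\log n}}{n}\,\|A\|_{HS}\le\frac{K\sqrt{C''\log n}}{\sqrt{n\eta}}\le\frac{\sqrt{C''}\,\delta^3}{C_3},
\]
so $|(\mathrm{I})|\le C'\bigl(\delta^6/C_3^2+\sqrt{C''}\,\delta^3/C_3\bigr)$, which becomes smaller than any prescribed multiple of $\delta^2$ once $C_3$ is chosen large in terms of $C$ and $c$; this is precisely the role of ``choosing $C_3$ sufficiently large'' in the statement. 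Combining with the bound on $(\mathrm{II})$ yields \eqref{traceyk1}, and — when the estimate is needed simultaneously over all $z\in D_{n,\varepsilon}$ and all $k\in[n]$ — one runs the above at a slightly smaller exponent over a polynomial net in $z$, unions over $k$, and removes the net with the crude Lipschitz bound $|\partial_z Y_k|\le Cn^2$ valid on $D_{n,\varepsilon}$.
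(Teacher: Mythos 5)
Your reduction is correct and in the same spirit as the paper: you split $Y_k-\tfrac1n\operatorname{tr}[(W_n-zI)^{-1}\Sigma_n^{(k)}]$ into the fluctuation $Y_k-\mathbb E[Y_k\mid W_{n,k}]$ and a deterministic error, dispatch the latter with Lemma~\ref{girkoq}, and concentrate the former as a quadratic form in $a_k$. However, the place you flag as ``the main obstacle'' --- the a priori bound $\operatorname{Im}\operatorname{tr}(W_{n,k}-zI)^{-1}=O(n)$, equivalently $N_I(W_n)=O(n|I|)$ on scale $\eta$ --- is not resolved in your proposal, and the resolution you sketch (a continuity-in-$\eta$ bootstrap carried along with the local law) is not what the paper does. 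The paper proves this eigenvalue-counting bound independently and before the local law, as Lemma~\ref{prelim} (an analogue of Proposition~66 in \cite{tao2011random}): assuming $N_I\ge Cn\eta$, then $\operatorname{Im} s_n\ge C$, which by Schur's complement gives $\tfrac1n\sum_k 1/|\eta+\operatorname{Im} Y_k|\ge C$; but $\operatorname{Im} Y_k\ge \tfrac{1}{2\eta}\|P_{H_k}a_k\|^2$ for $H_k$ the span of the $\Omega(n\eta)$ interlacing eigenvectors, and the projection Lemma~\ref{same} forces $\|P_{H_k}a_k\|^2=\Omega(\eta)$ with high probability, so $\operatorname{Im} Y_k=\Omega(1)$, contradicting the lower bound when $C$ is large. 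This is a direct self-contradiction argument, not a bootstrap, and it is exactly the ingredient your proof is missing.

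A secondary, more technical point: Lemma~1.2 of \cite{vu2015random} is a concentration inequality for weighted projections $\sum_j r_j|u_j^*X|^2$ with real weights $r_j\in[0,1]$, not a Hanson--Wright bound for $a_k^*Aa_k$ in terms of $\|A\|$ and $\|A\|_{HS}$. To use it on the resolvent one must first write $Y_k=\sum_j|u_j(W_{n,k})^*a_k|^2/(\lambda_j(W_{n,k})-z)$ and decompose the complex, unbounded weights over dyadic shells in $|\lambda_j-\operatorname{Re} z|$, applying the projection lemma shell by shell; this is what the paper's reference to the proof of Lemma~5.2 of \cite{vu2015random} (used together with Lemma~\ref{same} and Lemma~\ref{prelim}) accomplishes. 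So your plan would go through after two repairs: replace the bootstrap by the self-contained counting Lemma~\ref{prelim}, and replace the single Hanson--Wright application by the dyadic-shell projection argument.
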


 With the help of Lemmas \ref{girkoq} and \ref{weightedprojectionq}, note that, since $\frac{|\xi_{kk}|}{\sqrt{n}}=o(\delta^2)$, 
\begin{align}\label{weightedtrace1}
\frac{1}{n}\textnormal{tr}[(W_{n}-zI)^{-1}\Sigma_n^{(k)}]=\frac{1}{n}\sum_{l=1}^{n}s_{kl}f_n^{(l)},
\end{align}
 and combining \eqref{fqve}, \eqref{traceyk1}, \eqref{weightedtrace1}, we have 
\begin{align}\label{stochasticqve}
f_n^{(k)}(z)+\frac{1}{\frac{1}{n}\sum_{l=1}^{n}s_{kl}f_{n}^{(l)}(z)+z+o(\delta^2)}=0,\quad 1\leq k\leq n
\end{align}

with probability at least $1-n^{-C-9}$.

The next step involves using the stability analysis of quadratic vector equations provided in \cite{ajanki2015quadratic} to compare the solutions to \eqref{stochasticqve} and \eqref{gwignerqve}.  We have the following estimate.
\begin{lemma}\label{qvelemma1} For any constant $C>0$, one can choose $C_3$ in the \eqref{DefofC3} sufficiently large such that
\begin{align}\sup_{1\leq k\leq n}|f_n^{(k)}(z)-g_n^{(k)}(z)|=o(\delta^2), 
\end{align}

 for all $z\in  D_{n,\varepsilon}$ uniformly with probability at least $1-n^{-C-2}$.\end{lemma}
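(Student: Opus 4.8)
The plan is to regard \eqref{gwignerqve} and \eqref{stochasticqve} as the same quadratic vector equation, the second carrying a small additive perturbation, and to invoke the stability theory for such equations from \cite{ajanki2015quadratic}. Write $(\mathcal Sv)_k:=\frac1n\sum_{l=1}^n s_{kl}v_l$ for the averaging operator attached to the variance profile; flatness $c\le s_{kl}\le 1$ makes it bounded with $\|\mathcal Sv\|_\infty\le\|v\|_\infty$, and write $\eta_0:=C_3^2K^2\log n/(n\delta^6)$ for the lower cutoff on $\textnormal{Im}(z)$ in \eqref{DefofC3}. Then $g=(g_n^{(k)})_k$ solves $g_k=-(z+(\mathcal Sg)_k)^{-1}$, while on the event of probability at least $1-n^{-C-9}$ on which \eqref{stochasticqve} holds, $f=(f_n^{(k)})_k$ solves $f_k=-(z+(\mathcal Sf)_k+d_k)^{-1}$ with $\|d\|_\infty=o(\delta^2)$ uniformly in $k$ (the per-$k$ estimates of Lemmas \ref{girkoq} and \ref{weightedprojectionq}, union-bounded over $k$).

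Subtracting the two fixed-point relations and using $z+(\mathcal Sg)_k=-1/g_k$ and $z+(\mathcal Sf)_k+d_k=-1/f_k$ gives, with $h:=f-g$, the exact identity
\[
 h_k=f_kg_k\big((\mathcal Sh)_k+d_k\big),\qquad\text{so}\qquad\big((I-g^2\mathcal S)h\big)_k=g_k^2d_k+g_kh_k(\mathcal Sh)_k+g_kh_kd_k,
\]
where $g^2\mathcal S:v\mapsto\big(g_k^2(\mathcal Sv)_k\big)_k$ is the linearization of the quadratic vector equation at its solution $g$. From \cite{ajanki2015quadratic} I would import, uniformly for $z\in D_{n,\varepsilon}$: (i) boundedness of the solution, $\sup_k|g_n^{(k)}(z)|\le C$; and (ii) bulk stability, i.e. $I-g^2\mathcal S$ is invertible with the norm of its inverse bounded by a constant $C_\ast$ depending only on $\varepsilon,c$. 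Applying $(I-g^2\mathcal S)^{-1}$ and using $\|\mathcal Sh\|_\infty\le\|h\|_\infty$ then yields the self-consistent bound $\|h\|_\infty\le C_\ast'(\|d\|_\infty+\|h\|_\infty^2)$ for all $z\in D_{n,\varepsilon}$. Since $\delta$ is fixed and $\|d\|_\infty=o(\delta^2)$, for $n$ large this forces the dichotomy: either $\|h\|_\infty\le 2C_\ast'\|d\|_\infty=o(\delta^2)$, or $\|h\|_\infty\ge(2C_\ast')^{-1}$, with a non-empty forbidden gap between the two branches.

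Which branch occurs is settled by a continuity argument in $\eta=\textnormal{Im}(z)$. The base case is large $\textnormal{Im}(z)$: for $\textnormal{Im}(z)\ge T:=\max\{K,\log n\}$, the deterministic bounds $|f_n^{(k)}|=|q_{kk}|\le\textnormal{Im}(z)^{-1}$, $|g_n^{(k)}|\le\textnormal{Im}(z)^{-1}$, the Schur identity $-1/f_n^{(k)}=z+\xi_{kk}/\sqrt n+Y_k$ and $|Y_k|\le K^2/\textnormal{Im}(z)$ give $\|h\|_\infty=o(\delta^2)$ — in particular below $(2C_\ast')^{-1}$, so we start in the first branch there. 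To descend to $\eta_0$ I would make \eqref{stochasticqve} uniform on the compact strip $\{z\in D_{n,\varepsilon}:\eta_0\le\textnormal{Im}(z)\le T\}$ by a net argument: on a polynomially fine net of this strip of size at most $n^7$, union-bounding \eqref{stochasticqve} keeps the failure probability at most $n^{-C-2}$, and since $f_n^{(k)},g_n^{(k)}$ are Lipschitz in $z$ with polynomial constant there (as $\textnormal{Im}(z)\ge\eta_0\gg n^{-1}$), the self-consistent bound and the dichotomy extend across the whole strip. Fixing $E$ with $\rho_n(E)\ge\varepsilon$ and decreasing $\eta$ along $z=E+i\eta$, the continuous map $\eta\mapsto\|h(E+i\eta)\|_\infty$ cannot jump the gap, so it stays in the first branch all the way to $\eta=\eta_0$, giving $\|h\|_\infty=o(\delta^2)$. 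The union over such $E$ gives the lemma with probability $1-n^{-C-2}$, and $C_3$ is fixed (via Lemma \ref{weightedprojectionq}) precisely so that $\|d\|_\infty=o(\delta^2)$ at that probability.

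I expect the main obstacle to be the two imported statements from \cite{ajanki2015quadratic} — specifically the \emph{uniformity} of the stability constant $C_\ast$: one has to verify that our model meets the flatness/primitivity hypotheses there and that $C_\ast$ depends only on $\varepsilon,c$ (not on $n$ or on the particular $z\in D_{n,\varepsilon}$), so that the quadratic inequality has an $n$-independent forbidden gap and the continuity argument genuinely closes. Granting that, the remaining ingredients — the exact identity for $h$, the net-plus-Lipschitz upgrade, and the continuity bootstrap in $\eta$ — are routine.
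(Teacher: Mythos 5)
Your proof is correct and follows the paper's strategy: obtain a pointwise stability estimate for the perturbed quadratic vector equation from \cite{ajanki2015quadratic}, then propagate it over $D_{n,\varepsilon}$ via a polynomial net plus a continuity argument in $\textnormal{Im}(z)$, starting from large imaginary part where the trivial bound $\|f_n-g_n\|_\infty\le 2/\textnormal{Im}(z)$ forces the good branch of the dichotomy. The only difference is cosmetic: you unpack the linearization $(I-g^{2}\mathcal S)$ and re-derive the quadratic self-consistent inequality yourself, whereas the paper imports this packaged as Theorem 2.12 of \cite{ajanki2015quadratic} (stated as Lemma~\ref{QVEstab}), with the flatness condition $c\le s_{ij}\le 1$ and the bulk bound $\sup_k|g_n^{(k)}(z)|\le 1/\varepsilon$ from Lemma 5.4(i) there --- exactly the two facts you flagged --- supplying its hypotheses.
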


With Lemma \ref{qvelemma1}, we have for any $C>0$, there exists $C_3>0$  in \eqref{DefofC3} such that \begin{align}\label{smn}
|s_n(z)-m_n(z)|=\left|\frac{1}{n}\sum_{k=1}^n f_n^{(k)}(z)-\frac{1}{n}\sum_{k=1}^n g_n^{(k)}(z)\right|=o(\delta^2)\end{align}
uniformly for all $z\in D_{n,\varepsilon}$ with probability at least $1-n^{-C}$.

To complete the proof of Theorem \ref{locallawgwig}, we need the following well-known connection between the Stieltjes transform and empirical spectral distribution, shown for example in Lemma 64 in \cite{tao2011random} and also Lemma 4.1 in \cite{vu2015random}.
\begin{lemma} \label{control1}
 Let $M_n$ be a general Wigner-type matrix. Let $\varepsilon,\delta>0$. for any constant $C_1>0$, there exists a constant $C>0$ such that  suppose that one has the bound 
\begin{align}|s_n(z)-m_n(z)|\leq \delta	\notag
\end{align}
 with probability at least $1-n^{-C}$ uniformly for all $z\in D_{n,\varepsilon}$, then for any bulk interval $I$ with $|I|\geq \max\{2\eta,\frac{\eta}{\delta}\log\frac{1}{\delta}\}$ where $\eta=\frac{C_3^2K^2\log n}{n}$, one has 
\begin{align} \left| N_I-n\int_{I}\rho_n(x)dx\right|\leq\delta n|I| 	\notag
\end{align}
 with  probability at least $1-n^{-C_1}$.
\end{lemma}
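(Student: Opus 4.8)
The plan is to use the standard Helffer--Sj\"ostrand / resolvent-based argument connecting control of the Stieltjes transform on a horizontal line to control of the counting function, exactly as in Lemma 64 of \cite{tao2011random} or Lemma 4.1 of \cite{vu2015random}, but keeping track of the $K$-dependence in the scale $\eta = \frac{C_3^2 K^2 \log n}{n}$. Fix a bulk interval $I = [a,b]$ with $|I| \geq \max\{2\eta, \frac{\eta}{\delta}\log\frac{1}{\delta}\}$. The starting identity is that for any $\eta > 0$,
\begin{align}
\mathrm{Im}\, s_n(x+i\eta) = \frac{1}{n}\sum_j \frac{\eta}{(\lambda_j - x)^2 + \eta^2}, \notag
\end{align}
so that $\pi \eta\, \mathrm{Im}\, s_n(x+i\eta)$ is a smoothed version of the empirical measure at scale $\eta$, and likewise $\pi\,\mathrm{Im}\, m_n(x+i\eta) = \int \frac{\eta}{(y-x)^2+\eta^2}\,\rho_n(y)\,dy$ is $\rho_n$ smoothed at the same scale. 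First I would integrate the hypothesis $|s_n(z) - m_n(z)| \leq \delta$ over $z = x + i\eta$ for $x$ ranging over $I$ (and over a slightly enlarged window), turning the pointwise bound into a bound on $\left|\int_I \mathrm{Im}\, s_n(x+i\eta)\,dx - \int_I \mathrm{Im}\, m_n(x+i\eta)\,dx\right| \leq \delta |I|$.

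The two remaining pieces are deterministic. On the model side, $\int_I \mathrm{Im}\, m_n(x+i\eta)\,dx$ must be compared with $\pi \int_I \rho_n(x)\,dx$; since $\rho_n \geq \varepsilon$ on the bulk interval and (by \cite{ajanki2015quadratic}) $\rho_n$ is bounded and H\"older/Lipschitz continuous there, the smoothing error is $O(\eta \log(|I|/\eta)) \cdot \|\rho_n\|_\infty$, which by the hypothesis $|I| \geq \frac{\eta}{\delta}\log\frac{1}{\delta}$ is $\lesssim \delta|I|$. On the empirical side, $\int_I \mathrm{Im}\, s_n(x+i\eta)\,dx$ must be compared with $\frac{\pi}{n} N_I$; the difference comes from eigenvalues near the endpoints $a, b$ (within $O(\eta)$) being counted fractionally, plus a tail from far-away eigenvalues. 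I would control the endpoint/tail contributions by applying the hypothesis at a few auxiliary points near $a$ and $b$ to get an a priori bound $N_{[a-\eta,a+\eta]} = O(n\eta\,\|\rho_n\|_\infty + \delta n \eta)$ on the number of eigenvalues in a window of size $\eta$ around each endpoint — a ``no eigenvalue pile-up'' estimate — and then the mismatch between $N_I$ and $n\pi^{-1}\int_I \mathrm{Im}\, s_n\,dx$ is $O(n\eta)$, again absorbed into $\delta n|I|$ using $|I| \geq 2\eta$. Combining the three estimates gives $|N_I - n\int_I \rho_n\,dx| \leq \delta n|I|$ after renaming constants, and the probability $1 - n^{-C_1}$ is inherited directly from the probability $1 - n^{-C}$ in the hypothesis (with $C$ chosen as a function of $C_1$ to cover the union over the $O(\mathrm{poly}(n))$ auxiliary points, by a net/monotonicity argument for the Stieltjes transform on $D_{n,\varepsilon}$).

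The main obstacle, and the only place real care is needed, is the endpoint/tail control: the argument must rule out an anomalous concentration of eigenvalues of $W_n$ just outside $I$ at scale $\eta$, and this has to be done using only the provided bound $|s_n - m_n| \leq \delta$ on $D_{n,\varepsilon}$ together with the lower bound $\rho_n \geq \varepsilon$ on the bulk — one cannot invoke rigidity or a stronger local law. The standard trick is that $\mathrm{Im}\, s_n(x+i\eta) \geq \frac{1}{n}\sum_{|\lambda_j - x|\leq \eta} \frac{\eta}{2\eta^2} = \frac{N_{[x-\eta,x+\eta]}}{2n\eta}$, so $N_{[x-\eta,x+\eta]} \leq 2n\eta\,\mathrm{Im}\, s_n(x+i\eta) \leq 2n\eta(\mathrm{Im}\, m_n(x+i\eta) + \delta) \leq 2n\eta(\pi\|\rho_n\|_\infty + \delta)$, which is exactly the required bound; the subtlety is ensuring the point $x \pm i\eta$ (or a nearby one with $\rho_n(\mathrm{Re}) \geq \varepsilon$) actually lies in $D_{n,\varepsilon}$, which is where the assumption that $I$ is a \emph{bulk} interval — hence a neighborhood of its endpoints is still in the region where $\rho_n$ is bounded below — is used. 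Everything else is routine bookkeeping of the smoothing errors, already carried out in \cite{tao2011random, vu2015random} for the $K = O(1)$ case.
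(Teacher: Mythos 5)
Your proposal follows the same route as the paper: the paper gives no proof of Lemma~\ref{control1}, but simply cites Lemma~64 of \cite{tao2011random} and Lemma~4.1 of \cite{vu2015random}, and your sketch is precisely a reconstruction of the argument in those references (integrate $\mathrm{Im}\,s_n$ over $I$ at height $\eta$, compare the two smoothed quantities via the hypothesis, control the deterministic smoothing error and the empirical endpoint error, with an a priori no-pile-up bound derived from the hypothesis itself). So the approach matches.

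One small bookkeeping caution, which does not change the verdict since you explicitly defer to \cite{tao2011random, vu2015random} for the details: the single-height Poisson-smoothing error for the deterministic side is not $O(\eta\log(|I|/\eta))\|\rho_n\|_\infty$, but rather $O(\eta\log(L/\eta))\|\rho_n\|_\infty$ where $L$ is the diameter of $\operatorname{supp}\rho_n$, and this is $O(\eta\log n)$, which is \emph{not} absorbed by $\delta|I|$ when $|I|$ is as small as $\max\{2\eta,\tfrac{\eta}{\delta}\log\tfrac1\delta\}$ with $\delta$ a fixed constant. The references avoid this by not using a single fixed height: the argument is either a Helffer--Sj\"ostrand representation of a smoothed indicator of $I$ (a two-dimensional integral over $(x,y)$, $y\geq\eta$, with weights $y\,q''_\eta(x)$ and $q'_\eta(x)$ that tame the far field) or, equivalently, a contour-type identity integrating $s_n$ along the vertical boundary segments above $a$ and $b$ as well as along $I$ at height $\eta$. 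This is what makes the error scale like $\eta\log(1/\delta)$ rather than $\eta\log n$ and hence fit inside $\delta|I|$. If you want to write the proof in full rather than cite it, you should use that two-dimensional/contour version of the smoothing rather than the fixed-height version you describe.
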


From \eqref{smn}, for any constant $C_1>0$, we can choose $C_3$ in \eqref{DefofC3} large enough such that $$|s_n(z)-m_n(z)|\leq \delta$$ uniformly for all $z\in D_{n,\varepsilon}$ with probability $1-n^{-C}$, where $C$ is the constant in the assumption of Lemma \ref{control1}, then Theorem \ref{locallawgwig} follows from Lemma \ref{control1}.
%By taking $C_3>\max\{C, C_1+10^4\}$ in \eqref{DefofC3}, Theorem \ref{locallawgwig} then follows from Lemma \ref{control1}.

\subsubsection{Proof of Theorem \ref{deloc}}

The proof is based on Lemma 41 from \cite{tao2011random} given below. 
\begin{lemma}\label{lemm29}
Let $W_n$ be a $n\times n$ Hermitian matrix, and $W_{n,k}$ be the submatrix of $W_n$ with $k$-th row and column removed, and let $u_i(W_n)$ be a unit eigenvector of $W_n$ corresponding to $\lambda_i(W_n)$, and $x_k$ be the $k$-th coordinate of $u_i(W_n)$. Suppose that none of the eigenvalues of $W_{n,k}$ are equal to $\lambda_i(W_n)$. Let $a_k$ be the $k$-th row of $W_n$ with  of $k$-th entry removed; then
\begin{align}\label{entry}
|x_k|^2=\frac{1}{1+\sum_{j=1}^{n-1}(\lambda_j(W_{n,k})-\lambda_i(W_n))^{-2}|u_j(W_{n,k})^*a_k|^2},
\end{align}
 where $u_j(W_{n,k})$ is a unit eigenvector corresponding to $\lambda_j(W_{n,k})$.
\end{lemma}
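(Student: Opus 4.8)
The plan is a direct linear-algebra computation exploiting the block structure of $W_n$ relative to the $k$-th coordinate. By relabelling the coordinates we may assume $k=n$, so that
\begin{align}
W_n=\begin{pmatrix} W_{n,k} & a_k\\ a_k^* & w_{kk}\end{pmatrix},\notag
\end{align}
where $w_{kk}$ is the $k$-th diagonal entry of $W_n$, and we write the unit eigenvector as $u_i(W_n)=(v^*,\overline{x_k})^*$ with $v\in\mathbb C^{n-1}$ and $x_k$ its $k$-th coordinate. Writing out $W_n u_i(W_n)=\lambda_i(W_n)u_i(W_n)$ blockwise, the first $n-1$ rows give $(W_{n,k}-\lambda_i(W_n)I)v=-x_k a_k$.

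First I would invoke the hypothesis that no eigenvalue of $W_{n,k}$ equals $\lambda_i(W_n)$: this makes $W_{n,k}-\lambda_i(W_n)I$ invertible, so $v=-x_k(W_{n,k}-\lambda_i(W_n)I)^{-1}a_k$. Then I would impose the normalization $\|u_i(W_n)\|^2=\|v\|^2+|x_k|^2=1$, which, since $W_{n,k}-\lambda_i(W_n)I$ is Hermitian, yields
\begin{align}
|x_k|^2\Big(1+\big\|(W_{n,k}-\lambda_i(W_n)I)^{-1}a_k\big\|^2\Big)=1.\notag
\end{align}
In particular $x_k\neq 0$, which is also forced directly by the hypothesis, since $x_k=0$ would make $v$ a nonzero eigenvector of $W_{n,k}$ with eigenvalue $\lambda_i(W_n)$.

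Finally I would expand the norm in the orthonormal eigenbasis $\{u_j(W_{n,k})\}_{j=1}^{n-1}$ of $W_{n,k}$: from the spectral decomposition we have $(W_{n,k}-\lambda_i(W_n)I)^{-1}=\sum_{j=1}^{n-1}(\lambda_j(W_{n,k})-\lambda_i(W_n))^{-1}u_j(W_{n,k})u_j(W_{n,k})^*$, so Parseval's identity gives
\begin{align}
\big\|(W_{n,k}-\lambda_i(W_n)I)^{-1}a_k\big\|^2=\sum_{j=1}^{n-1}(\lambda_j(W_{n,k})-\lambda_i(W_n))^{-2}\,|u_j(W_{n,k})^*a_k|^2,\notag
\end{align}
and substituting this into the previous display gives \eqref{entry}.

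There is no genuine obstacle here — the argument is elementary and self-contained. The only point requiring care is to use the non-degeneracy hypothesis at exactly the place where $W_{n,k}-\lambda_i(W_n)I$ must be inverted, and to keep the normalization bookkeeping straight; this is precisely the role that hypothesis plays in the statement.
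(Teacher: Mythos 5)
Your argument is correct: the block decomposition of $W_n$ with respect to the $k$-th coordinate, combined with invertibility of $W_{n,k}-\lambda_i(W_n)I$ (exactly where the non-degeneracy hypothesis enters), the unit-norm constraint, and the spectral expansion of the resolvent, yields \eqref{entry} precisely as you write. The paper does not supply its own proof but cites this as Lemma 41 of \cite{tao2011random}, and your derivation is the standard elementary argument given there, so the approaches coincide.
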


Another lemma we need is a weighted projection lemma for random vectors with different variances. It is a slight generalization of  Lemma 1.2 in \cite{vu2015random}. Note that in the below \begin{align}\mathbb E|u_j^*X|^2=\textnormal{tr}(u_ju_j^*\Sigma),\notag
\end{align} 	 and the proof follows verbatim as in \cite{vu2015random}.
\begin{lemma}\label{same}
	Let $X=(\xi_1,\dots,\xi_n)$ be a $K$-bounded random vector in $\mathbb C^n$ such that $\textnormal{Var}(\xi_i)=\sigma_i^2$, $0\leq \sigma_i^2\leq 1$. Then there are constants $C,C'>0$ such that the following holds. Let $H$ be a subspace of dimension $d$ with an orthonormal basis $\{u_1,\dots,u_d\}$, and $\Sigma=\textnormal{diag}(\sigma_1^2,\dots,\sigma_n^2)$. Then for any $1\geq r_1,\dots, r_d\geq 0$,
	\begin{align}\label{reweight}
	\mathbb P\left(\left|\enskip \sqrt{\sum_{j=1}^d r_j|u_j^*X|^2}-\sqrt{\sum_{j=1}^d r_j \textnormal{tr}(u_ju_j^*\Sigma)}\enskip\right|\geq t\right)\leq C\exp (-C'\frac{t^2}{K^2}).
	\end{align} In particular, by squaring, it follows that
	\begin{align} \label{252}
	\mathbb P\left(\left|\sum_{j=1}^d r_j|u_j^*X|^2-\sum_{j=1}^d r_j \textnormal{tr}(u_ju_j^*\Sigma)\right|\geq 2t\sqrt{\sum_{j=1}^d r_j \textnormal{tr}(u_ju_j^*\Sigma)}+t^2\right)\leq C\exp (-C'\frac{t^2}{K^2}).
	\end{align}
\end{lemma}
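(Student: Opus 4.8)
The plan is to reproduce the proof of Lemma~1.2 in \cite{vu2015random} almost verbatim; the one genuinely new point is that concentration now occurs around $\mu:=\bigl(\sum_{j=1}^d r_j\,\textnormal{tr}(u_ju_j^*\Sigma)\bigr)^{1/2}$ instead of $\bigl(\sum_{j=1}^d r_j\bigr)^{1/2}$, and this affects only one line. Set $v_j:=\sqrt{r_j}\,u_j$, $V:=[v_1\,|\cdots|\,v_d]$, and define $\phi(x):=\bigl(\sum_{j=1}^d|v_j^*x|^2\bigr)^{1/2}=\|V^*x\|_2$ on $\mathbb C^n$, viewed as $\mathbb R^{2n}$ with the Euclidean metric, so that $\phi(X)^2=\sum_{j=1}^d r_j|u_j^*X|^2$ is exactly the object to be controlled. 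As a composition of a linear map with a norm, $\phi$ is a seminorm, hence convex; and since $VV^*=\sum_{j=1}^d r_ju_ju_j^*\preceq(\max_j r_j)\,\Pi_H\preceq I$ (orthonormality of the $u_j$ and $r_j\le1$), we have $\|V^*\|_{\mathrm{op}}\le1$, so $\phi$ is $1$-Lipschitz.

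First I would invoke Talagrand's concentration inequality for convex Lipschitz functions of independent bounded variables. The coordinates $\xi_1,\dots,\xi_n$ of $X$ are independent and satisfy $|\xi_i|\le K$; viewing each $\xi_i$ as an independent $\mathbb R^2$-valued vector in a ball of radius $K$, the vector form of Talagrand's inequality gives a median $m$ of $\phi(X)$ with
\[
\mathbb P\bigl(|\phi(X)-m|\ge t\bigr)\le C\exp\!\bigl(-C' t^2/K^2\bigr),\qquad t\ge 0 .
\]
This is precisely the step used in \cite{vu2015random}, and the variance profile plays no role here.

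It then remains to replace $m$ by $\mu$. By the identity $\mathbb E|u_j^*X|^2=\textnormal{tr}(u_ju_j^*\Sigma)$ recorded above, $\mathbb E[\phi(X)^2]=\sum_{j=1}^d r_j\,\mathbb E|u_j^*X|^2=\mu^2$, and this (in place of $\mathbb E[\phi(X)^2]=\sum_j r_j$) is the only change from \cite{vu2015random}. The tail bound yields $\textnormal{Var}(\phi(X))=O(K^2)$, hence $|\mathbb E\phi(X)-m|=O(K)$; combined with $(\mathbb E\phi(X))^2\le\mathbb E[\phi(X)^2]=\mu^2$ (Jensen) and $\mu^2-(\mathbb E\phi(X))^2=\textnormal{Var}(\phi(X))=O(K^2)$, this forces $|\mathbb E\phi(X)-\mu|=O(K)$ and so $|m-\mu|=O(K)$. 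Substituting $\mu$ for $m$ and absorbing the $O(K)$ shift into $t$ (trivial when $t$ is of order $K$, after enlarging $C$; harmless for larger $t$, after shrinking $C'$) gives \eqref{reweight}. Finally \eqref{252} follows by squaring: on the complement of the event in \eqref{reweight}, $\bigl|\sum_{j}r_j|u_j^*X|^2-\mu^2\bigr|=|\phi(X)-\mu|\,(\phi(X)+\mu)\le t\,(2\mu+t)$, which is the claimed bound since $\mu^2=\sum_j r_j\,\textnormal{tr}(u_ju_j^*\Sigma)$.

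There is no hard step here, which is why \cite{vu2015random} can be quoted verbatim. The two points needing a little care are: (i) that the Lipschitz constant of $\phi$ is exactly $1$, not something depending on the $r_j$ --- this is what keeps a clean $K^2$ in the exponent and uses $r_j\le1$; and (ii) the passage from the median $m$ to the explicit constant $\mu$, the sole place where the nonconstant variances $\sigma_i^2$ enter. A minor technical wrinkle is the complex setting: one treats $\mathbb C^n$ as $\mathbb R^{2n}$ and uses the form of Talagrand's inequality for independent bounded \emph{vectors}, since $\textnormal{Re}\,\xi_i$ and $\textnormal{Im}\,\xi_i$ need not be independent of each other.
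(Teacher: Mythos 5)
Your proposal is correct and takes essentially the same route that the paper implicitly invokes: the paper states that the proof of Vu's Lemma~1.2 goes through verbatim once one observes $\mathbb E|u_j^*X|^2=\textnormal{tr}(u_ju_j^*\Sigma)$, and you have reconstructed exactly that proof --- Talagrand's convex-Lipschitz concentration for the seminorm $\phi(x)=\|V^*x\|_2$ (with $\|V^*\|_{\mathrm{op}}\le1$ from $r_j\le1$ and orthonormality), replacement of the median by the explicit centering $\mu=\bigl(\sum_j r_j\,\textnormal{tr}(u_ju_j^*\Sigma)\bigr)^{1/2}$ via the $O(K)$ median--mean shift, and squaring to get \eqref{252}. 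Your remark about treating $\mathbb C^n$ as $\mathbb R^{2n}$ with the vector form of Talagrand (since $\textnormal{Re}\,\xi_i$ and $\textnormal{Im}\,\xi_i$ need not be independent) is a correct point of care and is consistent with what the cited proof requires.
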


Below we show how delocalization follows from Lemma \ref{lemm29}, Lemma \ref{same} and Theorem \ref{locallawgwig}.
For any $C_1>0$ and any $\lambda_i(W_n)$ in the bulk, by Theorem \ref{locallawgwig}, one can find an  interval $I$ centered at $\lambda_i(W_n)$ and $|I|=\frac{K^2C_2\log n}{n}$ for some sufficiently large $C_2$ such that $N_I\geq \delta_1n |I|$ for some small $\delta_1>0$ with probability at least $1-n^{-C_1-3}$. By Cauchy interlacing law, we can find a set $J\subset \{1,\dots, n-1\}$ with $|J|\geq N_I/2$ such that $|\lambda_j(W_{n,k})-\lambda_i(W_n)|\leq |I|$ for all $j\in J$.  Let $X_k$ be the $k$-th column of $M_n$ with the $k$-th entry removed. Note that from Lemma \ref{same}, by taking $r_j=1, j\in J,$ and  $t=C_3K\sqrt{\log n}$ for some constant $C_3\geq \frac{C_1+3}{C'}$ in \eqref{reweight}, using assumption $s_{ij}\geq c$, we have
\begin{align}\label{it}
	\sqrt{\sum_{j\in J}|u_j(W_{n,k})^*X_k|^2}&\geq \sqrt{\sum_{j\in J} \textnormal{tr}(u_j(W_{n,k})u_j^*(W_{n,k})\Sigma)}-C_3K\sqrt{\log n}\notag\\
	&\geq \sqrt{c|J|}-C_3K\sqrt{\log n}\notag\\
	&\geq (\sqrt{c}-\frac{C_3}{\sqrt{C_2\delta_1/2}})\sqrt{|J|}
\end{align} with probability at least $1-n^{-C_1-3}$. By choosing $C_2$ sufficiently large, \eqref{it} implies
\begin{align*}
	\sum_{j\in J}|u_j(W_{n,k})^*X_k|^2\geq C'|J|
\end{align*}
for some constant $C'>0$ with probability at least $1-n^{-C_1-3}$.
 By \eqref{entry}, 
\begin{align*}
|x_k|^2&=\frac{1}{1+\sum_{j=1}^{n-1}(\lambda_j(W_{n,k})-\lambda_i(W_n))^{-2}|u_j(W_{n,k})^*\frac{X_k}{\sqrt n}|^2}	\\
&\leq \frac{1}{1+\sum_{j\in J}(\lambda_j(W_{n,k})-\lambda_i(W_n))^{-2}|u_j(W_{n,k})^*\frac{X_k}{\sqrt n}|^2}   \\
&\leq \frac{1}{1+n^{-1}|I|^{-2}\sum_{j\in J}|u_j(W_{n,k})^*X_k|^2} \\
&\leq \frac{1}{1+n^{-1}|I|^{-2}C'|J|} \\
& \leq \frac{2|I|}{C'\delta_1}\leq \frac{K^2C_4^2\log n}{n}
\end{align*}
for some constant $C_4$ with probability at least $1-2n^{-C_1-3}$. Thus by taking a union bound, $\|u_i\|_{\infty}\leq \frac{C_4K\sqrt {\log n}}{\sqrt n}$ with probability at least $1-n^{-C_1}$ for all $1\leq i\leq n$.

\subsection{Proof of Auxiliary Lemmas}
We now prove the all the lemmas in the proof of Theorem \ref{locallawgwig}.

\subsubsection{Proof of Lemma \ref{girkoq}}

Let $\displaystyle \eta:=\frac{C_3^2K^2\log n}{n\delta^6}
$
and $z:=x+\sqrt{-1}\cdot  \eta$.
By \eqref{expectedY},  it suffices to show for all $1\leq k\leq n$,
\begin{align}\label{toproof}
\left|\textnormal{tr}[(W_n-zI)^{-1}\Sigma_n^{(k)}]-\textnormal{tr}[(W_{n,k}-zI)^{-1}S_n^{(k)}]\right|\leq \frac{1}{\eta}.
\end{align}

We will use the following result known as  Lemma 1.1 in Chapter 1 of \cite{girko2001theory}.

 \begin{lemma}\label{girkolemma1}
 Let $\vec{c}=(c_1,\dots,c_n)$ be a real column vector, and $M_n=(\xi_{ij})_{n\times n}$ be a Hermitian matrix, for any $z$ with and $\textnormal{Im} z>0$,   we have, for any $1\leq k\leq n$, 
\begin{align}
 \vec{c}^T(M_n-zI)^{-1}\vec{c}-\vec{c_k}^T(M_{n,k}-zI)^{-1}\vec{c_k}=\frac{c_k^2-\vec{\xi_k}^*R_k(2c_k\vec{c_k})+\vec{\xi_k}^*R_k\vec{c_k}\vec{c_k}^TR_k\vec{\xi_k}}{\xi_{kk}-z-\vec{\xi_k}^* R_k\vec{\xi_k}} \notag
 \end{align}
  where $R_k=(M_{n,k}-zI)^{-1}$, $\vec{c_k}$ is the vector $\vec{c}$ with  the $k$-th coordinate removed, and $\vec{\xi_k}$ is the $k$-th column of $M_n$ with the $k$-th element removed.
\end{lemma}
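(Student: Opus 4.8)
The plan is to prove this identity directly from the Schur complement (block matrix inversion) formula; it is a purely algebraic statement, the only analytic input being the invertibility of $M_n - zI$ and $M_{n,k}-zI$ guaranteed by $\textnormal{Im}\,z>0$. Although \cite{girko2001theory} is cited for it, a short self-contained verification runs as follows.

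First I would conjugate $M_n$ by the permutation matrix interchanging the indices $k$ and $n$, which preserves both sides of the claimed identity after the matching relabeling of $\vec{c}$, so that we may assume $k=n$ and write
\begin{align}
M_n - zI = \begin{pmatrix} M_{n,k}-zI & \vec{\xi_k} \\ \vec{\xi_k}^* & \xi_{kk}-z \end{pmatrix},\notag
\end{align}
with $M_{n,k}-zI$ the leading $(n-1)\times(n-1)$ block. Since $z$ is non-real, $M_n-zI$ is invertible; since $M_{n,k}$ is Hermitian its spectrum is real, so $R_k=(M_{n,k}-zI)^{-1}$ exists; and then the Schur complement $\xi_{kk}-z-\vec{\xi_k}^*R_k\vec{\xi_k}$ of the leading block is nonzero (comparing determinants). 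The block-inverse formula then gives
\begin{align}
(M_n-zI)^{-1} = \begin{pmatrix} R_k + d\,R_k\vec{\xi_k}\,\vec{\xi_k}^*R_k & -d\,R_k\vec{\xi_k} \\ -d\,\vec{\xi_k}^*R_k & d \end{pmatrix}, \qquad d:=\frac{1}{\xi_{kk}-z-\vec{\xi_k}^*R_k\vec{\xi_k}}.\notag
\end{align}

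Writing $\vec{c}=(\vec{c_k},c_k)^T$ in the same block decomposition and expanding $\vec{c}^T(M_n-zI)^{-1}\vec{c}$, the $(1,1)$-block produces $\vec{c_k}^TR_k\vec{c_k}+d\,(\vec{c_k}^TR_k\vec{\xi_k})(\vec{\xi_k}^*R_k\vec{c_k})$, whose first summand equals $\vec{c_k}^T(M_{n,k}-zI)^{-1}\vec{c_k}$ and cancels against the second term on the left-hand side of the assertion. The two off-diagonal blocks contribute $-d\,c_k(\vec{c_k}^TR_k\vec{\xi_k}+\vec{\xi_k}^*R_k\vec{c_k})$ and the $(2,2)$-block contributes $d\,c_k^2$, so collecting the surviving terms over the common denominator $\xi_{kk}-z-\vec{\xi_k}^*R_k\vec{\xi_k}$ yields exactly the stated fraction, with the two cross terms combined into $\vec{\xi_k}^*R_k(2c_k\vec{c_k})$.

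The one step that deserves attention is this last combination. In the real symmetric setting of \cite{girko2001theory}, $R_k$ is symmetric and $\vec{\xi_k}$ is real, so $\vec{c_k}^TR_k\vec{\xi_k}=\vec{\xi_k}^*R_k\vec{c_k}$ and the numerator is literally $c_k^2-\vec{\xi_k}^*R_k(2c_k\vec{c_k})+\vec{\xi_k}^*R_k\vec{c_k}\,\vec{c_k}^TR_k\vec{\xi_k}$ as written. For a genuinely complex Hermitian $M_n$ these two scalars need not coincide, and the numerator is more accurately $c_k^2-c_k(\vec{c_k}^TR_k\vec{\xi_k}+\vec{\xi_k}^*R_k\vec{c_k})+\vec{\xi_k}^*R_k\vec{c_k}\,\vec{c_k}^TR_k\vec{\xi_k}$; both versions, however, have the same size, so this discrepancy is immaterial for the $O(1/\eta)$ trace bound \eqref{toproof} for which the lemma is invoked. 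Apart from this bookkeeping, the whole derivation is a direct substitution.
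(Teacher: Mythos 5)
Your derivation is correct. The paper does not prove this lemma at all—it is simply quoted from Girko's book (Lemma 1.1, Chapter 1 of \cite{girko2001theory})—so there is no in-paper argument to compare against; your block-inverse computation supplies the missing verification. After reducing to $k=n$, the Schur-complement inverse formula is applied correctly, the $(1,1)$-block contributes $\vec{c_k}^T R_k\vec{c_k}$ plus the $d$-weighted rank-one term, the off-diagonal blocks give the cross terms, the $(2,2)$-block gives $d\,c_k^2$, and the nonvanishing of the denominator $\xi_{kk}-z-\vec{\xi_k}^*R_k\vec{\xi_k}$ follows from $\det(M_n-zI)=\det(M_{n,k}-zI)\bigl(\xi_{kk}-z-\vec{\xi_k}^*R_k\vec{\xi_k}\bigr)\neq 0$ as you say.

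Your side remark about the cross term is also accurate and worth keeping. Writing the middle term as $\vec{\xi_k}^*R_k(2c_k\vec{c_k})$ tacitly assumes $\vec{c_k}^TR_k\vec{\xi_k}=\vec{\xi_k}^*R_k\vec{c_k}$, which holds when $M_n$ is real symmetric (so $R_k^T=R_k$ and $\vec{\xi_k}$ is real), the setting of \cite{girko2001theory} and the one actually used in the paper's SBM application; for a genuinely complex Hermitian $M_n$ the two scalars need not coincide, and the numerator is the symmetrized expression $c_k^2-c_k\bigl(\vec{c_k}^TR_k\vec{\xi_k}+\vec{\xi_k}^*R_k\vec{c_k}\bigr)+\vec{\xi_k}^*R_k\vec{c_k}\,\vec{c_k}^TR_k\vec{\xi_k}$ you wrote down. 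As you note, this has no effect on the way the lemma is used in Lemma \ref{girkoq}: there one takes the conditional expectation over $\vec{c}$ given $W_n$, and every term containing a single factor of $c_k$ has mean zero since $c_k$ is centered and independent of $\vec{c_k}$ and of $W_n$, so only the $s_{kk}$ and $a_k^*R_kS_n^{(k)}R_ka_k$ contributions survive in either form of the numerator.
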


 We introduce a real random vector $\vec{c}=(c_1,\dots,c_n)$ whose coordinates are mean zero, independent variables also independent of $W_n$ with $\textnormal{Var}(c_i)=s_{ki}$ for $ 1\leq i\leq n$. 
 
 Apply Lemma \ref{girkolemma1} to $W_n$ and $\vec{c}$. We have $R_k=(W_{n,k}-zI)^{-1}$, and
 
 \begin{align}
 	\vec{c}^T(W_n-zI)^{-1}\vec{c}-\vec{c_k}^T(W_{n,k}-zI)^{-1}\vec{c_k}=\frac{c_k^2-a_k^*R_k(2c_k\vec{c_k})+a_k^*R_k\vec{c_k}\vec{c_k}^TR_ka_k}{\frac{\xi_{kk}}{\sqrt n}-z-Y_k}.\notag
 \end{align}
By taking the conditional expectation with respect to $c$, conditioned on $W_{n}$, we have 
\begin{align}
	\mathbb E[\vec{c}^T(W_n-zI)^{-1}\vec{c}-\vec{c_k}^T(W_{n,k}-zI)^{-1}\vec{c_k}\mid W_n]&=\frac{s_{kk}+a_k^* R_kS_n^{(k)}R_k a_k}{\frac{\xi_{kk}}{\sqrt {n }}-z-Y_k}	.\notag
	\end{align}
	Calculating the left hand side yields
	\begin{align}
	\text{tr}[(W_n-zI)^{-1}\Sigma_n^{(k)}]-\text{tr}[(W_{n,k}-zI)^{-1}S_n^{(k)}]&=\frac{s_{kk}+a_k^*R_kS_n^{(k)}R_k a_k}{\frac{\xi_{kk}}{\sqrt{ n }}-z-Y_k}.\notag
\end{align}
 Since we have
\begin{align*}
	|s_{kk}+a_k^* R_k S_n^{(k)}R_k a_k|&\leq 1+| a_k^* R_kS_n^{(k)}R_k a_k|\\
	&\leq   1+a_k^*((W_{n,k}-xI)^2+\eta^2I)^{-1}a_k,
\end{align*}
and
\begin{align}\textnormal{Im}\left(\frac{\xi_{kk}}{\sqrt n}-z-Y_k\right)=-\eta\left(1+
a_k^*((W_{n,k}-xI)^2+\eta^2I)^{-1} a_k\right),\notag
\end{align}

 \eqref{toproof} holds. 	This completes the proof of Lemma \ref{girkoq}.

\subsubsection{Proof of Lemma \ref{weightedprojectionq}}

%Taking  $t=\frac{cK}{20}\log n, c_j=1, 1\leq j\leq d$ we have with overwhelming probability,
%
%
%\begin{equation}\label{eqn:einstein}
%\|P_H(X)\|^2\geq \sum_{j=1}^d \textnormal{tr}(u_ju_j^* \Sigma)- \frac{cK}{10}\log n\sqrt{\sum_{j=1}^d \textnormal{tr}(u_ju_j^* \Sigma)}-\frac{K^2c^2}{400}\log ^2 n	
%\end{equation}

We need a preliminary bound on the number of eigenvalues in a short interval. The following Lemma is similar to Proposition 66 in \cite{tao2011random}.

\begin{lemma}\label{prelim}
	For any constant $C_1>0$, there exists a constant  $C_2>0$ such that for any interval $I\subset \mathbb R$ with $|I|\geq \frac{C_2K^2\log n}{n}$, one has
	\begin{align}
	N_I(W_n)=O( n|I|)	\label{roughestimate}
	\end{align}
 with probability at least $1-n^{-C_1}$.\end{lemma}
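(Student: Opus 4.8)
The plan is to estimate $N_I(W_n)$ via the Stieltjes transform of $W_n$, following the standard ``Markov inequality'' argument used in Proposition 66 of \cite{tao2011random}, but taking care of the variance profile and the $K$-boundedness. Fix a bulk-or-arbitrary interval $I$ of length $\ell:=|I|\geq \frac{C_2K^2\log n}{n}$, centered at some point $x_0\in\mathbb R$, and set $z = x_0 + i\ell$. The key elementary bound is
\begin{align}
N_I(W_n) \;\leq\; C\,n\,\ell\,\operatorname{Im} s_n(z),\notag
\end{align}
which follows because each eigenvalue $\lambda_j\in I$ contributes $\operatorname{Im}\frac{1}{\lambda_j-z} = \frac{\ell}{(\lambda_j-x_0)^2+\ell^2}\geq \frac{\ell}{2\ell^2} = \frac{1}{2\ell}$ to $n\operatorname{Im} s_n(z) = \sum_j \frac{\ell}{(\lambda_j-x_0)^2+\ell^2}$, and every other term is nonnegative. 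So it suffices to show $\operatorname{Im} s_n(z) = O(1)$ with probability at least $1-n^{-C_1}$.

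To control $\operatorname{Im} s_n(z)$, I would reuse the Schur-complement machinery already set up in the excerpt: $s_n(z) = \frac1n\sum_k f_n^{(k)}(z)$ with $f_n^{(k)}(z) = \left(-\tfrac{\xi_{kk}}{\sqrt n} - z - Y_k\right)^{-1}$ and $Y_k = a_k^*(W_{n,k}-zI)^{-1}a_k$. Writing $z = x_0 + i\ell$, we have $\operatorname{Im}\big(-\tfrac{\xi_{kk}}{\sqrt n}-z-Y_k\big) = -\ell - \operatorname{Im} Y_k$, and since $\operatorname{Im}(W_{n,k}-zI)^{-1}$ is negative definite, $\operatorname{Im} Y_k \leq 0$, hence
\begin{align}
|f_n^{(k)}(z)| \;\leq\; \frac{1}{|\operatorname{Im}(-\tfrac{\xi_{kk}}{\sqrt n}-z-Y_k)|} \;\leq\; \frac{1}{\ell},\notag
\end{align}
which only gives $\operatorname{Im} s_n(z) \leq 1/\ell$ — not good enough. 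The fix is the usual self-consistent bootstrap: lower-bound $|{-}Y_k - z - \tfrac{\xi_{kk}}{\sqrt n}|$ from below by a constant by exploiting that $\operatorname{Im} Y_k = \ell\sum_i |a_{ki}|^2 |q^{(n,k)}_{ii}\text{-type terms}|$ is, up to concentration, comparable to $\operatorname{tr}[(W_{n,k}-zI)^{-1}S_n^{(k)}]$ whose imaginary part is $\asymp \operatorname{Im} s_n(z)$ by the interlacing/Lemma \ref{girkoq} estimates, together with the fact that the real part $x_0 - \operatorname{Re} Y_k$ cannot simultaneously be small for all $k$. More precisely: if $\operatorname{Im} s_n(z) = \Phi$ were large, then $\operatorname{Im} Y_k \approx c\Phi$ (using $s_{ki}\geq c$) for most $k$ after applying Lemma \ref{same} to $Y_k$ with $t = O(K\sqrt{\log n})$ to replace $Y_k$ by its conditional expectation $\frac1n\operatorname{tr}[(W_{n,k}-zI)^{-1}S_n^{(k)}]$ up to an error $O\!\big(\sqrt{\tfrac{K^2\log n}{n}\Phi} + \tfrac{K^2\log n}{n}\big) = o(\Phi)$ when $\Phi \gg \tfrac{K^2\log n}{n\ell}$-scale; then $|f_n^{(k)}(z)| \leq \frac{1}{\ell + \operatorname{Im} Y_k} \leq \frac{1}{c\Phi}$ for most $k$, giving $\Phi = \operatorname{Im} s_n(z) \lesssim \frac1n(\text{few bad }k)\cdot\frac1\ell + \frac{1}{c\Phi}$, i.e. $\Phi^2 \lesssim 1/c$, so $\Phi = O(1)$. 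The bad set where the concentration from Lemma \ref{same} fails has probability $\leq n\cdot n^{-C'C_3^2} \leq n^{-C_1}$ once $C_2$ (hence the available $\sqrt{\log n}$ budget) is large enough, and the bad-$k$ contribution is absorbed because on that event $|f_n^{(k)}|\leq 1/\ell$ and there are at most $O(n\ell\Phi)$... which one closes by choosing constants; alternatively one runs this as a continuity/bootstrap argument in $\ell$ starting from $\ell = 1$ down to $\ell = C_2K^2\log n/n$ along a fine net of $z$-values and uses Lipschitz continuity of $s_n$ and of $m_n$ to pass from the net to all $z$.

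The main obstacle is the circularity in the bootstrap: controlling $\operatorname{Im} Y_k$ requires knowing $\operatorname{Im} s_n$ is not too large, which is exactly what we want to prove. The clean way around it is the standard descent argument — establish the bound trivially at scale $\operatorname{Im} z = 1$ (where $|s_n(z)|\leq 1$ deterministically), then decrease $\operatorname{Im} z$ in small multiplicative steps, at each step using the bound from the previous (larger) scale as the a priori input to the self-consistent estimate, with a union bound over a polynomial-size net of $z$'s and the deterministic Lipschitz estimate $|s_n(z)-s_n(z')|\leq |z-z'|/(\operatorname{Im} z)^2$ to control $s_n$ between net points. The $K$-dependence enters only through the concentration error in Lemma \ref{same}, which is why the threshold $|I|\geq C_2 K^2\log n/n$ appears: it is precisely what makes the fluctuation $O(K\sqrt{\log n/n})$ negligible against the scale $\ell$ at which we work.
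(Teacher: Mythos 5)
Your reduction to showing $\operatorname{Im} s_n(z)=O(1)$, and your identification of the projection Lemma~\ref{same} as the concentration tool, match the paper. But the way you want to deploy the concentration lemma — concentrating $Y_k$ around its conditional mean $\frac{1}{n}\operatorname{tr}[(W_{n,k}-zI)^{-1}S_n^{(k)}]$ and then closing a self-consistent inequality in $\Phi=\operatorname{Im} s_n$ — is both harder than needed and not directly valid as stated. The weights in $Y_k=\sum_j\frac{|u_j^*a_k|^2}{\lambda_j(W_{n,k})-z}$ are complex and unbounded in magnitude, so Lemma~\ref{same} (which requires $0\le r_j\le 1$) cannot be applied to $Y_k$ without first splitting into real/imaginary parts, regularizing the weights, and controlling how many $\lambda_j(W_{n,k})$ sit in dyadic annuli around $\operatorname{Re}(z)$ — which is exactly the eigenvalue-count information that Lemma~\ref{prelim} is supposed to produce. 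You correctly diagnose this circularity, but the multi-scale descent you sketch to fix it is a substantial additional piece of machinery whose details (what exactly the a priori bound at scale $\ell_0$ buys you at scale $\ell_0/2$, how errors accumulate along the descent) are not worked out.

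The paper sidesteps all of this with a much simpler observation: you do not need two-sided concentration of $Y_k$, only a one-sided lower bound on $\operatorname{Im} Y_k$, and that lower bound is available from an \emph{unweighted} projection. Concretely, the paper argues by contradiction. Suppose $N_I(W_n)\ge Cn\eta$. By Cauchy interlacing, for each $k$ there is a set $J$ with $|J|\gtrsim n\eta$ and $\lambda_j(W_{n,k})\in I$ for $j\in J$; let $H_k$ be the span of the corresponding eigenvectors. Then
\begin{align}
\operatorname{Im} Y_k \;\ge\; \eta\sum_{j\in J}\frac{|u_j^*a_k|^2}{\eta^2+(\lambda_j-x)^2} \;\ge\; \frac{1}{2\eta}\,\|P_{H_k}a_k\|^2,\notag
\end{align}
and Lemma~\ref{same} applied with the constant weights $r_j=1$ (perfectly within its hypotheses, no weighted version needed) gives $\|P_{H_k}a_k\|^2=\Omega(\eta)$ with probability $1-n^{-C_1-5}$, uniformly over the $O(1)$-many relevant $J$ after a union bound. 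This forces each $|f_n^{(k)}|=O(1)$, hence $\operatorname{Im} s_n(x+i\eta)=O(1)$, contradicting $N_I\ge Cn\eta$ (which by the Stieltjes identity \eqref{233} forces $\operatorname{Im} s_n\ge C/2$) once $C$ is taken large. The whole argument lives at the single scale $\eta=\frac{C_2K^2\log n}{n}$, with no bootstrap and no need to know anything a priori about where the eigenvalues of $W_{n,k}$ fall. So the gap in your proposal is that you are trying to prove a much stronger concentration than the lemma requires; switching from ``concentrate $Y_k$'' to ``lower-bound $\operatorname{Im} Y_k$ via the near-eigenspace projection'' both removes the illegitimate application of Lemma~\ref{same} and eliminates the multi-scale descent entirely.

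(Minor: $\operatorname{Im}(W_{n,k}-zI)^{-1}$ is positive, not negative, definite for $\operatorname{Im} z>0$, so $\operatorname{Im} Y_k\ge 0$; your conclusion $|f_n^{(k)}|\le 1/\ell$ is still correct.)
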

 
 \begin{proof}
 By the union bound, it suffices to show that the failure probability for \eqref{roughestimate} is less than $1-n^{-C_1-1}$ for
 $$|I|=\eta:=\frac{C_2K^2\log n}{n}
 $$	for some sufficiently large $C_2$. By 
 \begin{align}\label{233}
	\textnormal{Im}(s_n(x+\sqrt{-1}\eta))=\frac{1}{n}\sum_{i=1}^n\frac{\eta}{\eta^2+(\lambda_i(W_n)-x)^2},
\end{align}  it suffices to show that the event
 \begin{align}
 N_I\geq Cn\eta	\label{254}
 \end{align}
 and 
  \begin{align} \label{255}
  \textnormal{Im}(s_n(x+\eta\sqrt{-1}))\geq C	
 \end{align}
fails with probability at least $1-n^{-C_1-1}$ for some large absolute constant $C>1$. Suppose we have \eqref{254}, \eqref{255}, by \eqref{233},
\begin{align}
\frac{1}{n}\sum_{k=1}^n\left|\textnormal{Im}\left( \frac{1}{\frac{\xi_{kk}}{\sqrt n} -(x+\eta\sqrt{-1})-Y_k}\right)\right|	\geq C. \notag
\end{align}
Using the bound $\displaystyle \left|\text{Im} \left(\frac{1}{z} \right)\right|\leq \frac{1}{|\text{Im}(z)|}$, it implies

\begin{align}\label{257}
\frac{1}{n}\sum_{k=1}^n\frac{1}{|\eta+\textnormal{Im}(Y_k)|}\geq C.	
\end{align}
 
 Note that $$W_{n,k}=\sum_{j=1}^{n-1}\lambda_j(W_{n,k})u_j^*(W_{n,k})u_j(W_{n,k}),$$
where $u_j(W_{n,k}), 1\leq j\leq n-1$ are orthonormal basis of $W_{n,k}$, 
one has 

\begin{align}
Y_k&=a_k^*(W_{n,k}-zI)^{-1}a_k=\sum_{j=1}^{n-1}\frac{|u_j^*(W_{n,k})a_k |^2}{\lambda_j(W_{n,k})-(x+\eta \sqrt{-1})}\notag
\end{align}
and hence
\begin{align}
\textnormal{Im} Y_k\geq \eta \sum_{j=1}^{n-1}\frac{|u_j^*(W_{n,k})a_k |^2}{\eta^2+(\lambda_j(W_{n,k}-x)^2}.	\notag
\end{align}

On the other hand, from \eqref{254}, by Cauchy interlacing theorem, we can find an index set $J$ with $|J|\geq \eta n$ such that $\lambda_j(W_{n,k})\in I$ for all $j\in J$, then we have 
\begin{align}\label{259}
\textnormal{Im}(Y_k)\geq \frac{1}{2\eta}\sum_{j\in J}|u_j^*(W_{n,k})a_k|^2=\frac{1}{2\eta}\|P_{H_k}a_k\|^2,	
\end{align}
where $P_{H_k}$ is the orthogonal projection onto a subspace $H_k$ spanned by eigenvectors $u_j(W_{n,k}),j\in J$. From \eqref{257}, \eqref{259}, we have
\begin{align}\label{2510}
	\frac{1}{n}\sum_{k=1}^n \frac{2\eta}{2\eta^2+\|P_{H_k}a_k\|^2}\geq C.
\end{align}
On the other hand, taking $r_j=1, 1\leq j\leq d, d=|J|$ and $t=C_4 K\sqrt{\log n}$ for some suffciently large $C_4$ in \eqref{252}, using assumption $s_{ij}\geq c$, we have that $\|P_{H_k}(a_k)\|^2=\Omega(\eta)$ with probability at least $1-O(n^{-C'C_4})\geq 1-n^{-C_1-5}$. Taking the union bound over all possible choice of $J$, we have \eqref{2510} holds with probability at least $1-n^{-C_1-1}$. The claim then follows by taking $C$ sufficiently large.

Now we are ready to prove Lemma \ref{weightedprojectionq}. 
	From Lemma \ref{girkoq}, it suffices to show
	\begin{align} Y_k-\mathbb E[Y_k|W_{n,k}]=o (\delta^2), \quad 1\leq k\leq n,		
	\end{align}
	with probability at least $1-n^{-C-10}$. We can write 
	\begin{align}Y_k=\sum_{j=1}^{n-1}\frac{|u_j^*(W_{n,k})a_k|^2}{\lambda_j(W_{n,k})-z},	\notag
	\end{align}
where $\{u_j(W_{n,k})\}_{j=1}^{n-1}$ are orthonormal eigenvectors of $W_{n,k}$.
	Moreover,
	\begin{align*}
		\mathbb E[Y_k|W_{n,k}]&=\frac{1}{n}\textnormal{tr}[(W_{n,k}-zI)^{-1}S_n^{(k)}] \\
		                      &=\frac{1}{n}\textnormal{tr}\left[\sum_{j=1}^{n-1}\frac{1}{\lambda_j(W_{n,k})-z}u_j(W_{n,k})u_j^*(W_{n,k})S_n^{(k)}\right] \\
		                      &=\frac{1}{n}\sum_{j=1}^{n-1}\frac{\textnormal{tr}[u_j(W_{n,k})u_j^*(W_{n,k})S_n^{(k)}]}{\lambda_j (W_{n,k})-z}.
	\end{align*}
	Let $X_k=\sqrt{n}a_k$, and define
	\begin{align}t_j:=
	 |u_j(W_{n,k})^*X_k|^2-\textnormal{tr}[u_j(W_{n,k})u_j^*(W_{n,k})S_n^{(k)}].\notag
	\end{align}
	It suffices to show that
	$$\left|Y_k-\mathbb E[Y_k|W_{n,k}]\right|=\frac{1}{n}\left|\sum_{j=1}^{n-1}\frac{t_j}{\lambda_j(W_{n,k})-x-\sqrt{-1}\eta}\right|=o(\delta^2)$$
	with probability at least $1-n^{-C-10}$.
	The remaining part of the proof goes through in the same way as in the proof of Lemma 5.2 in \cite{vu2015random}  with Lemma \ref{same} and Lemma \ref{prelim}. Then Lemma \ref{weightedprojectionq} follows.
\end{proof}
\subsubsection{Proof of Lemma \ref{qvelemma1}}
We define $g_n(z,x):=g_n^{(k)}(z)$ if $x\in [\frac{k-1}{n},\frac{k}{n}), 1\leq k\leq n$ and 
\begin{align}\label{varianceprofile}
	S_n(x,y):=s_{ij} \quad \text{ if } x\in[\frac{i-1}{n},\frac{i}{n}), y\in [\frac{j-1}{n},\frac{j}{n}).  
\end{align}

 Then \eqref{gwignerqve} can be written as
\begin{align}
m_n(z)&=\int_{0}^{1}g_n(z,x) dx,
\label{integralqve}\\
  -\frac{1}{g_n(z,x)}&=z+\int_{0}^{1}S_n(x,y) g_n(z,y) dy, \label{Q}
\end{align} for all $x\in[0,1]$. Similarly, define 
$f_n(z,x):=f_n^{(k)}(z)$ if $x\in [\frac{k-1}{n},\frac{k}{n}), 1\leq k\leq n$. Then we can write \eqref{strans} and \eqref{stochasticqve} as 
\begin{align*}
s_n(z)&=\int_{0}^1 f_n(z,x) dx\\
	-\frac{1}{f_n(z,x)}&=z+\int_{0}^{1}S_n(x,y)f_n(z,y)dy+d_n(z,x),                     
\end{align*}
where, for any fixed $z$ from \eqref{stochasticqve}, \begin{align}\label{infi}
\|d_n(z)\|_{\infty}:=\sup_{x\in[0,1]} |d_n(z,x)|=o(\delta^2)	
 \end{align}
 with probability at least least $1-n^{-C-9}$ for any fixed $z\in D_{n,\varepsilon}$.

The following lemma follows from Theorem 2.12 in \cite{ajanki2015quadratic} which controls the stability of equation \eqref{Q} in the bulk. Here we use the fact that $c\leq s_{ij} \leq 1 $ to guarantee the assumptions of $S_n$ in Theorem 2.12 in \cite{ajanki2015quadratic}. Define
\begin{align}\Lambda(z):=\sup_{x\in [0,1]}|f_n(z,x)-g_n(z,x)|.\notag
\end{align}
\begin{lemma} \label{QVEstab}
 For any fixed $z\in D_{n,\varepsilon}$,  there  exist  constants $\lambda, C_5>0$ depending on $\varepsilon$ but independent of $n$  such that  for $z\in D_{n,\varepsilon}$,
\begin{align}\Lambda(z)\mathbf{1}\{\Lambda(z)\leq \lambda \}  \leq C_5\|d_n (z)\|_{\infty}.\label{qvestabilitybound}
\end{align}
\end{lemma}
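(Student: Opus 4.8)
The plan is to pass to the linearized form of the quadratic vector equation relating $f_n(z,\cdot)$ and $g_n(z,\cdot)$, and then invoke the stability bound for the QVE stability operator in the bulk, which is exactly Theorem~2.12 in \cite{ajanki2015quadratic}. This is a deterministic argument: it uses only that $f_n,g_n,d_n$ are related by \eqref{Q} and its perturbation, the flatness bounds $c\le s_{ij}\le 1$, and the hypothesis $\rho_n(\mathrm{Re}\,z)\ge\varepsilon$ built into $D_{n,\varepsilon}$.

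First I would subtract the two equations. Write $g=g_n(z,\cdot)$, $f=f_n(z,\cdot)$, $d=d_n(z,\cdot)$, and let $S$ denote the integral operator $(Sh)(x)=\int_0^1 S_n(x,y)h(y)\,dy$ on $L^\infty[0,1]$, which satisfies $\|S\|_{L^\infty\to L^\infty}\le \sup_x\int_0^1 S_n(x,y)\,dy\le 1$ since $S_n\le 1$. Equation \eqref{Q} and its perturbed version give $-\tfrac1f+\tfrac1g=S(f-g)+d$, i.e. $\tfrac{f-g}{fg}=S(f-g)+d$. Multiplying by $fg$ and substituting $fg=g^2+g(f-g)$ yields
\begin{align}
(1-g^2S)(f-g)=g\,(f-g)\,S(f-g)+g^2d+g\,(f-g)\,d.\notag
\end{align}
Denoting the stability operator by $B:=1-g^2S$ (multiplication by $g^2$ followed by $S$), and taking $L^\infty$-norms, one obtains
\begin{align}
\Lambda(z)\le \|B^{-1}\|_{L^\infty\to L^\infty}\Big(\|g\|_\infty\Lambda(z)^2+\|g\|_\infty^2\|d_n(z)\|_\infty+\|g\|_\infty\Lambda(z)\|d_n(z)\|_\infty\Big).\notag
\end{align}

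The two inputs I would then quote from \cite{ajanki2015quadratic} are: (i) the uniform bound $\|g_n(z,\cdot)\|_\infty\le C_g$ for a constant $C_g$ depending only on the flatness bounds (Theorem~2.1 there, applicable since $\mathrm{supp}(\rho_n)\subset[-2,2]$ and $c\le S_n\le 1$); and (ii) the key estimate that in the bulk the stability operator is invertible with $\|B^{-1}\|_{L^\infty\to L^\infty}\le\Gamma$ for a constant $\Gamma$ depending only on $\varepsilon$ and $c$, uniformly over $z\in D_{n,\varepsilon}$ and in $n$ --- this is Theorem~2.12 of \cite{ajanki2015quadratic}, whose hypotheses on $S_n$ are exactly the flatness bounds $c\le s_{ij}\le 1$ recalled before the statement of the lemma. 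Plugging these in, on the event $\{\Lambda(z)\le\lambda\}$ with $\lambda\le 1$ one gets $\Lambda(z)\le \Gamma C_g\lambda\,\Lambda(z)+\Gamma C_g(C_g+1)\|d_n(z)\|_\infty$; choosing $\lambda=\lambda(\varepsilon)>0$ small enough that $\Gamma C_g\lambda\le\tfrac12$ absorbs the first term and gives $\Lambda(z)\mathbf 1\{\Lambda(z)\le\lambda\}\le C_5\|d_n(z)\|_\infty$ with $C_5=2\Gamma C_g(C_g+1)$, as claimed.

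The whole substance of the lemma is in black-box input (ii): the uniform-in-$n$, uniform-in-the-bulk bound on $\|B^{-1}\|$ is where the genuine QVE analysis lives, and it is imported wholesale from \cite{ajanki2015quadratic}. The only point demanding care on our side is verifying that the continuum reformulation \eqref{integralqve}--\eqref{Q} over $([0,1],\mathrm{Leb})$, with variance profile \eqref{varianceprofile} and the flatness bounds $c\le S_n\le 1$, indeed meets the standing assumptions under which Theorem~2.12 of \cite{ajanki2015quadratic} is stated; given that, the remainder is the elementary absorption argument above.
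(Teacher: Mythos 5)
Your route is the paper's route, but with Theorem~2.12 of \cite{ajanki2015quadratic} unpacked rather than quoted. What you label input (ii) --- a uniform-in-$n$, uniform-on-$D_{n,\varepsilon}$ bound $\|B^{-1}\|_{L^\infty\to L^\infty}\leq\Gamma$ for the inverse stability operator --- is not the \emph{statement} of Theorem~2.12; it is one of the ingredients in its proof inside \cite{ajanki2015quadratic}. Theorem~2.12 is itself phrased precisely in the form of the lemma you are proving, $\Lambda(z)\mathbf 1\{\Lambda(z)\leq\lambda\}\leq C\|d\|_\infty$, under hypotheses that (a) $S$ is flat (here $c\le s_{ij}\le 1$ gives conditions A1--A3 with $L=1$) and (b) $\|g(z)\|_\infty$ is uniformly bounded. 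The paper therefore only verifies these two hypotheses and reads off the conclusion; the uniform bound $\|g_n(z)\|_\infty\le 1/\varepsilon$ comes from $\rho_n(\mathrm{Re}\,z)\ge\varepsilon$ via Lemma~5.4(i) of \cite{ajanki2015quadratic}, not Theorem~2.1 as you write (Theorem~2.1 gives existence, uniqueness, and the Stieltjes-transform structure, but not a uniform upper bound in the bulk). Your linearization $B(f-g)=g\,(f-g)\,S(f-g)+g^2d+g\,(f-g)\,d$ with $B=1-g^2S$, followed by the absorption argument, is a correct re-derivation of Theorem~2.12's estimate from its ingredients, so the mathematics is sound; you just need to repoint the $\|B^{-1}\|$ bound to the appropriate operator-norm estimate in \cite{ajanki2015quadratic} rather than to Theorem~2.12, or, as the paper does, simply quote Theorem~2.12 directly for the final inequality and skip the rederivation. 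One small slip: your parenthetical ``multiplication by $g^2$ followed by $S$'' is backwards --- in your computation $g^2S$ means apply $S$ first and then multiply pointwise by $g^2$.
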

\begin{proof}
Since the variance satisfies $c\leq s_{ij}\leq 1$, $S_n$ satisfies condition A1-A3 in Chapter 1 of \cite{ajanki2015quadratic}. Especially, it implies condition A3 with $L=1$. 

From the lower bound  $\rho_n(z)\geq \varepsilon$, Lemma 5.4 (i) in \cite{ajanki2015quadratic} implies,
\begin{align}
\sup_{1\leq k\leq n}|g_n^{(k)}(z)|\leq \frac{1}{\varepsilon}<\infty 	 \notag
\end{align}
for any $z$ with $\textnormal{Re}(z)\in I$, $\textnormal{Im}(z)>0$. Then the assumptions in Theorem 2.12 in \cite{ajanki2015quadratic} holds. \eqref{qvestabilitybound} then follows from Theorem 2.12 in \cite{ajanki2015quadratic}. \end{proof}

\begin{remark}
	Lemma \ref{QVEstab} is a stability result for the solution of \eqref{Q}, which is deterministic and does not require moment assumptions on the random matrix $M_n$.
\end{remark}

From Lemma \ref{QVEstab} and \eqref{infi},
 we have for any fixed $z\in D_{n,\varepsilon}$,
 \begin{align}\label{pointwise}\Lambda(z)\mathbf{1}\{\Lambda(z)\leq \lambda  \}  =o(\delta^2) 
\end{align} with probability at least $1-n^{-C-9}$.

We proceed with a continuity argument as in the proof of Theorem 3.2 in the bulk (Section 3.1 in \cite{ajanki2015quadratic}) to show \eqref{pointwise} holds uniformly for $z\in D_{n,\varepsilon}$ with  probability at least $1-n^{-C-2}$.

Now for any $0<\varepsilon'<\frac{\lambda}{4}$, we consider a line segment $$L=x+\sqrt{-1}\left[\frac{K^2C_3^2\log n}{n\delta^6},n\right]$$ for some fixed $x$ with $\rho_n(x)\geq \varepsilon, 0<\varepsilon<1/2$, and let $n$ be large enough such that 
$\frac{1}{n}<\varepsilon'$ 
and $\|d_n(z)\|_{\infty} \leq \varepsilon'$. let $L_n$ consist of $n^4$ evenly spaced points on $L$. Then we have  
\begin{align}\label{258}
\Lambda(z)\mathbf{1}\{\Lambda(z)\leq \lambda\}  \leq \varepsilon' 
\end{align}

for all $z\in L_n$ with probability at least $1-n^{-C-5}$.

 From Theorem 2.1 in \cite{ajanki2015quadratic}, $g_n(z,x)$ is the Stieltjes transform of a probability measure, hence the derivative of $g_n(z,x)$ is uniformly bounded by $\frac{1}{|\textnormal{Im}(z)|^{2}}\leq n^2$ for $z\in D_{n,\varepsilon}$. Similarly, for $f_n(z,x)$, from \eqref{fqve}, for $1\leq k\leq n$, 
\begin{align*}  
\left|\frac{\partial f_n^{(k)}(z)}{\partial z}\right|&=\left|\frac{1+\frac{\partial{Y_k}}{\partial z}}{(\frac{\xi_{kk}}{\sqrt{n}}-z-Y_k)^2}\right|\\
&\leq \left|\frac{1+a_k^*(W_{n,k}-zI)^{-2}a_k}{\frac{\xi_{kk}}{\sqrt{n}}-z-Y_k}\right|\frac{1}{|\frac{\xi_{kk}}{\sqrt{n}}-z-Y_k|}.
\end{align*}
 By theorem A.6. in \cite{bai2010spectral}, for $z=x+\sqrt{-1}\eta$,
 \begin{align}\left|\frac{1+a_k^*(W_{n,k}-zI)^{-2}a_k}{(\frac{\xi_{kk}}{\sqrt{n}}-z-Y_k)}\right|\leq \frac{1}{\eta},\notag
 \end{align}
  and 
  \begin{align}\left|\frac{\xi_{kk}}{\sqrt{n}}-z-Y_k\right|\geq \left|\textnormal{Im}(\frac{\xi_{kk}}{\sqrt{n}}-z-Y_k)\right|=\eta(1+a_k^*((W_{n,k}-xI)^2+\eta^2I)^{-1}a_k)\geq \eta,  	\notag
  \end{align}
 Note that for $z\in D_{n,\varepsilon}$, $\eta\geq\frac{K^2C_3^2\log n}{n\delta^6}\geq \frac{1}{n}$, we get 
 \begin{align}\left|\frac{\partial f_n^{(k)}(z)}{\partial z}\right|\leq \frac{1}{\eta^2}\leq n^2, \quad 1\leq k\leq n.\notag
 \end{align}
   So both $f_n(z,x)$ and $g_n(z,x)$ are $n^2$-Lipschitz function in $z$ for $z\in D_{n,\varepsilon}$. It follows that 
\begin{align}|\Lambda(z')-\Lambda(z)|\leq 2n^2|z'-z|,	\notag
\end{align}
 for any $z,z'\in L$. 
 We first claim that 
\begin{align}\label{614}
\Lambda(z)\mathbf{1}\{\Lambda(z)\leq \frac{\lambda \varepsilon}{2}\}\leq 2\varepsilon',
\end{align} for all $z\in L$ with probability at least $1-n^{-C-5}$.

Since $0<\varepsilon<1/2$, if $z\in L_n$, \eqref{614} is true from \eqref{258}. If $z\in L\setminus L_n$, choose some $z'\in L_n$ such that $|z-z'|\leq n^{-3}$. Suppose  $\displaystyle \Lambda(z)\leq \frac{\lambda\varepsilon}{2}$, note that
\begin{align}\label{25555}
|\Lambda(z')-\Lambda(z)|\leq 2n^2|z-z'|\leq \frac{2}{n}	,
\end{align}
which implies
\begin{align}\Lambda(z')\leq  \Lambda(z)+\frac{2}{n}\leq \frac{\lambda\varepsilon}{2}+\frac{2}{n}\leq \lambda	\notag
\end{align} with probability at least $1-n^{-C-5}$.
From  \eqref{258}, $\Lambda(z')\leq \varepsilon'$ with probability at least $1-n^{-C-5}$. From \eqref{25555},
\begin{align}\Lambda(z)\leq \Lambda(z')+\frac{2}{n}<2\varepsilon',
\end{align} with probability at least $1-n^{-C-5}$, 
therefore  \eqref{614} holds.

In the next step we show that the indicator function in \eqref{614} is identically equal to $1$.
From \eqref{258} we have 
$\displaystyle \Lambda(z)\not\in \left(2\varepsilon',\lambda/2\right) 
$
 with probability at least $1-n^{-C-5}$. 

Let $E$ be the event that
$\Lambda(z)\mathbf{1}\{\Lambda(z)\leq \frac{\lambda }{2}\}\leq 2\varepsilon'
$
happens. Conditioning on $E$, since $\Lambda(z)$ is $2n^2$-Lipschitz in $z$, and $L$ is simply connected, we have 
\begin{align}\Lambda(L):=\{\Lambda(z): z\in L\}\notag\end{align} is simply connected.
Therefore $\Lambda(L)$ is contained either in $[0,2\varepsilon']$ or $[\frac{\lambda}{2},\infty)$.

From \eqref{fqve}  we have for $1\leq k\leq n$,
 	\begin{align}
|f_n^{(k)}(z)|=\frac{1}{\left|-\frac{\xi_{kk}}{\sqrt{n}} -z-Y_k\right|}\leq \frac{1}{|\textnormal{Im}(z)|},\notag
\end{align}
 and  since $g_n^{(k)}(z)$ is a Stieltjes transform of a probability measure,  for $1\leq k\leq n$,
 \begin{align}
 	|g_n^{(k)}(z)|\leq \frac{1}{|\textnormal{Im}(z)|},\notag
 \end{align}
which implies 
\begin{align}
\Lambda(z)\leq \frac{2}{|\textnormal{Im}(z)|}.	\notag
\end{align}
Consider the point $z_n:=x+\sqrt{-1}\cdot n\in L$, we have
\begin{align}\Lambda(z_n)\leq \frac{2}{\textnormal{Im} (z_n)}=\frac{2}{n}\leq 2\varepsilon', \notag
\end{align} 
which implies $\Lambda(z_n)\in [0,2\varepsilon']$.
Hence for all $z\in L$,
$\Lambda(z)\leq 2\varepsilon'$ with probability at least $1-n^{-C-5}$, and the indicator function in \eqref{614} is identically equal to $1$.

	Now we extend the estimate to all $z\in D_{n,\varepsilon}$. Consider $n^3$ lines segments $$x_k+\sqrt{-1}\left[\frac{K^2C_3^2\log n}{n\delta^6},n\right], \rho_n(x_k)\geq \varepsilon, 1\leq k\leq n^3$$ such that the $n^2$-neighborhoods of points $\{x_k, 1\leq k\leq n^3\}$ cover any bulk interval of $\rho_n$.
	By the $2n^2$-Lipschitz property of $\Lambda(z)$ again, we can show 
$\Lambda(z)\leq 4\varepsilon'
$ for all $z$ with $\rho_n(\textnormal{Re}(z))>\varepsilon$, $\frac{K^2C_3^2\log n}{n\delta^6}\leq \textnormal{Im} z\leq n$, with probability at least $1-n^{-C-2}$.

 On the other hand, for all $z$ with $\textnormal{Im}(z)>n$, 
 \begin{align}
 \left \|f_n(z)-g_n(z)\right\|_{\infty}&\leq\frac{2}{\textnormal{Im} z}=O\left(  \frac{1}{n}\right).
 \end{align}

Combining these two cases, for all $z\in D_{n,\varepsilon}$  with probability at least $1-n^{-C-2}$,
 \begin{align}\|f_n(z)-g_n(z)\|_{\infty}=o(\delta^2).	\notag
 \end{align}
  This completes the proof of Lemma \ref{qvelemma1}.
\section{Applications: Sparse Matrices}
\subsection{Sparse General Wigner-type Matrices}\label{SparseRM}

Let $M_n$ be a  sparse general Wigner-type matrix with independent entries
$M_{ij}=\delta_{ij}\xi_{ij}	$ for $1\leq i\leq j\leq n$. Here $\delta_{ij}$ are  i.i.d. Bernoulli random variables which take value $1$ with probability 
$\displaystyle p=\frac{g(n)\log n}{n}$, where $g(n)$ is any function for which $g(n)\to\infty$ as $n\to\infty$, and $\xi_{ij}$ are independent random variables such that 
 \begin{align}
 \mathbb E\xi_{ij}=0, \mathbb E|\xi_{ij}|^2=s_{ij}, 	c\leq s_{ij}\leq 1,\notag
 \end{align}
and  in addition, $|\xi_{ij}|\leq K$ almost surely for $K=o(\sqrt{g(n)})$.

We can regard this model as the sparsification of  a general Wigner-type matrix by uniform sampling. Similar models were considered in  \cite{wood2012universality,luh2018sparse}. 

Considering the empirical spectral distribution of $W_n:=\frac{M_n}{\sqrt n p}$, we  specify a  local law for this model.

\begin{cor}\label{spars}
	Let $M_n$ be a sparse general Wigner-type matrix, let $\rho_n$ be the probability measure corresponding to equations \eqref{gwigner},\eqref{gwignerqve}.
	 For any constants $\delta,C_1>0$, there exists a constant $C_2>0$ such that  with probability at least $1-n^{-C_1}$, the following holds.   For any  bulk interval $I$ of length $|I|\geq  \frac{C_2K^2\log n}{np}$ ,   the number of eigenvalues $N_I$ of $W_n:=\frac{M_n}{\sqrt{np}}$ in $I$ obeys the concentration estimate
	\begin{align}\label{result}
	 \left |N_I-n\int_I \rho_n(x)dx \right|\leq \delta n |I|.	\end{align}
\end{cor}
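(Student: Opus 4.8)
\textbf{Proof proposal for Corollary \ref{spars}.}

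The plan is to reduce Corollary \ref{spars} to Theorem \ref{locallawgwig} by checking that the sparsified matrix, after the appropriate normalization, is itself a general Wigner-type matrix with the same variance profile and the bounded-entry condition of Section \ref{GWM}. Set $\widetilde M_n := \frac{1}{\sqrt p}(\delta_{ij}\xi_{ij})_{1\le i,j\le n}$, so that $W_n = \frac{M_n}{\sqrt{np}} = \frac{\widetilde M_n}{\sqrt n}$. First I would verify the mean and variance: since $\delta_{ij}$ is independent of $\xi_{ij}$ with $\mathbb E\delta_{ij}=p$ and $\mathbb E\delta_{ij}^2=p$ (Bernoulli), we have $\mathbb E\!\left[\frac{\delta_{ij}\xi_{ij}}{\sqrt p}\right]=0$ and $\mathbb E\!\left|\frac{\delta_{ij}\xi_{ij}}{\sqrt p}\right|^2 = \frac{1}{p}\mathbb E[\delta_{ij}^2]\,\mathbb E|\xi_{ij}|^2 = s_{ij}$. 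Hence $\widetilde M_n$ has exactly the variance profile $S_n$ with $c\le s_{ij}\le 1$, and the entries are independent for $1\le i\le j\le n$, so $\widetilde M_n$ satisfies all the structural hypotheses of a general Wigner-type matrix and produces the \emph{same} quadratic vector equation \eqref{gwigner}, \eqref{gwignerqve} and hence the same deterministic measure $\rho_n$.

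Next I would check the boundedness hypothesis. The entries of $\widetilde M_n$ are bounded almost surely by $\widetilde K := \frac{K}{\sqrt p}$, since $|\delta_{ij}\xi_{ij}/\sqrt p|\le K/\sqrt p$. The condition required by Theorem \ref{locallawgwig} is $\widetilde K = o\!\left(\sqrt{n/\log n}\right)$; substituting $p = g(n)\log n/n$ gives $\widetilde K = \frac{K}{\sqrt p} = K\sqrt{\frac{n}{g(n)\log n}}$, and using the assumption $K = o(\sqrt{g(n)})$ we get $\widetilde K = o\!\left(\sqrt{g(n)}\cdot\sqrt{\frac{n}{g(n)\log n}}\right) = o\!\left(\sqrt{n/\log n}\right)$, as needed. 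Therefore Theorem \ref{locallawgwig} applies verbatim to $W_n = \widetilde M_n/\sqrt n$: for any constants $\delta, C_1>0$ there is $C_2>0$ such that with probability at least $1-n^{-C_1}$, every bulk interval $I$ of $\rho_n$ with $|I|\ge \frac{C_2\widetilde K^2\log n}{n}$ satisfies $|N_I - n\int_I\rho_n|\le \delta n|I|$. Finally, translating the scale: $\frac{C_2\widetilde K^2\log n}{n} = \frac{C_2 K^2\log n}{np}$, which is precisely the interval length stated in Corollary \ref{spars}. This completes the reduction.

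There is no serious obstacle here: the corollary is essentially a change of variables. The only points requiring a moment of care are the bookkeeping of the Bernoulli second moment (using $\delta_{ij}^2=\delta_{ij}$, so that the $1/p$ factor exactly cancels rather than leaving a residual $p$-dependence, which is why one divides by $\sqrt p$ and not by $p$) and the verification that the inflated bound $\widetilde K=K/\sqrt p$ still lies in the admissible range $o(\sqrt{n/\log n})$ — this is exactly what the hypothesis $K=o(\sqrt{g(n)})$ is calibrated to guarantee. Since $\rho_n$ is unchanged under this rescaling, the notion of bulk interval transfers directly, and no new stability or concentration estimates are needed beyond those already established for Theorem \ref{locallawgwig}.
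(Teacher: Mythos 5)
Your proof is correct and follows exactly the paper's route: the paper likewise sets $H_n := M_n/\sqrt{p}$, notes that $\mathbb{E}h_{ij}=0$, $\mathbb{E}|h_{ij}|^2=s_{ij}$, and $|h_{ij}|\le K/\sqrt{p}=o(\sqrt{n/\log n})$, and then invokes Theorem \ref{locallawgwig} for $H_n$. You have simply filled in the short verifications (the Bernoulli second-moment cancellation and the calculation showing $K/\sqrt{p}=o(\sqrt{n/\log n})$ under $K=o(\sqrt{g(n)})$ and $p=g(n)\log n/n$) that the paper leaves implicit.
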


\begin{proof}
Define \begin{align}
 	H_n:=\frac{M_n}{\sqrt p}=(h_{ij})_{1\leq i,j\leq n}.	\notag
 \end{align}
 Then $\displaystyle
 \mathbb Eh_{ij}=0, \mathbb E|h_{ij}|^2=s_{ij},$ and $\displaystyle |h_{ij}|\leq \frac{K}{\sqrt p}=o\left(\sqrt{\frac{n}{\log n}}\right).
 $
 \eqref{result} follows as a corollary of Theorem \ref{locallawgwig} for $H_n$.
\end{proof}
The infinity norm of eigenvectors in the bulk can be estimated in a similar way.
\begin{cor} \label{delocspars}
Let $M_n$ be a sparse general Wigner-type matrix and $W_n=\frac{M_n}{\sqrt {np}}$. For any constant $C_1>0$ and  any bulk interval $I$  such that eigenvalue $\lambda_i(W_n)\in I$, with probability at least $1-n^{-C_1}$, there is a constant $C_2$ such that the corresponding unit eigenvector $u_i(W_n)$ satisfies 
\begin{align}\|u_i(W_n)\|_{\infty}\leq \frac{C_2K\log^{1/2} n}{\sqrt {np}}.	\notag
\end{align}	
\end{cor}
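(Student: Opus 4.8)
The plan is to reduce Corollary \ref{delocspars} to Theorem \ref{deloc} (equivalently Corollary \ref{delocspars} follows from the dense delocalization statement applied to a rescaled matrix) in exactly the same way that Corollary \ref{spars} was reduced to Theorem \ref{locallawgwig}. Set $H_n:=M_n/\sqrt{p}=(h_{ij})$. As already verified in the proof of Corollary \ref{spars}, we have $\mathbb E h_{ij}=0$, $\mathbb E|h_{ij}|^2=s_{ij}$ with $c\le s_{ij}\le 1$, and $|h_{ij}|\le K/\sqrt{p}=o(\sqrt{n/\log n})$, so $H_n$ is a general Wigner-type matrix in the sense of Section \ref{GWM}, but with compact-support parameter $\widetilde K:=K/\sqrt{p}$ in place of $K$. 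Moreover $W_n=M_n/\sqrt{np}=H_n/\sqrt{n}$, so $W_n$ is precisely the normalized matrix associated to $H_n$, and the two matrices have the same eigenvalues and the same eigenvectors. In particular the measure $\rho_n$ attached to $H_n$ via \eqref{gwigner},\eqref{gwignerqve} depends only on the variance profile $S_n$, hence is the same $\rho_n$ appearing in the statement.

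First I would apply Theorem \ref{deloc} to $H_n$: for any $C_1>0$ and any bulk interval $I$ with $\lambda_i(W_n)=\lambda_i(H_n/\sqrt n)\in I$, with probability at least $1-n^{-C_1}$ the corresponding unit eigenvector $u_i$ satisfies
\begin{align}
\|u_i\|_\infty\le \frac{C_2'\,\widetilde K\,\log^{1/2}n}{\sqrt n}
=\frac{C_2'\,K\,\log^{1/2}n}{\sqrt n\,\sqrt p}
=\frac{C_2'\,K\,\log^{1/2}n}{\sqrt{np}}\notag
\end{align}
for some constant $C_2'$. Since the eigenvectors of $H_n/\sqrt n=W_n$ are identical to those of $W_n$ itself, setting $C_2:=C_2'$ gives exactly the claimed bound $\|u_i(W_n)\|_\infty\le \frac{C_2 K\log^{1/2}n}{\sqrt{np}}$. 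One small bookkeeping point to check is that Theorem \ref{deloc} is stated for fixed $K$; here $\widetilde K=K/\sqrt p=K\sqrt{n/(g(n)\log n)}$ grows with $n$, so I would note that the proof of Theorem \ref{deloc} only uses $\widetilde K=o(\sqrt{n/\log n})$ (which holds here because $K=o(\sqrt{g(n)})$ forces $\widetilde K=o(\sqrt{n/\log n})$), together with the bulk local law Theorem \ref{locallawgwig} for $H_n$ — and the latter is available precisely because $\widetilde K=o(\sqrt{n/\log n})$, which is the running hypothesis of Section \ref{GWM}.

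The only genuine obstacle, and it is a minor one, is verifying that all the hypotheses of the dense results survive the rescaling with the enlarged support parameter $\widetilde K$: one needs $\widetilde K=o(\sqrt{n/\log n})$ for Theorem \ref{locallawgwig}/\ref{deloc} to apply, and one needs the interval-length condition $|I|\ge C_2\widetilde K^2\log n/n=C_2 K^2\log n/(np)$ from Theorem \ref{locallawgwig} to be exactly the hypothesis of Corollary \ref{spars}, which it is. Everything else — the $K$-bounded weighted projection Lemma \ref{same}, the estimate $s_{ij}\ge c$, and the counting lower bound $N_I\ge \delta_1 n|I|$ — is used as a black box through Theorem \ref{deloc}, with $K$ replaced throughout by $\widetilde K$. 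Thus the proof is a two-line corollary: ``Apply Theorem \ref{deloc} to $H_n:=M_n/\sqrt p$, which is a general Wigner-type matrix with bound $K/\sqrt p=o(\sqrt{n/\log n})$, and note $W_n=H_n/\sqrt n$ so that $W_n$ and $H_n$ share eigenvectors.''
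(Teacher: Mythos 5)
Your proposal is correct and matches the paper's implicit argument: the paper states Corollary \ref{delocspars} without proof, remarking only that it follows ``in a similar way'' to Corollary \ref{spars}, and your rescaling $H_n=M_n/\sqrt p$ (so $W_n=H_n/\sqrt n$, with bound parameter $\widetilde K=K/\sqrt p=o(\sqrt{n/\log n})$) applied to Theorem \ref{deloc} is exactly that argument. Your bookkeeping check that $\widetilde K=o(\sqrt{n/\log n})$, and that the $C_2$ in Theorem \ref{deloc} does not otherwise depend on $K$, is the right thing to verify.
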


\subsection{Sparse Stochastic Block Models}\label{SparseSBM}
\subsubsection{Finite Number of Classes}
Our analysis of  sparse random matrices applies to the adjacency matrices of sparse stochastic block models. 

Consider the  adjacency matrix $A_n=(a_{ij})_{1\leq i,j\leq n}$ of an SBM graph, where $A_n$ is a random real symmetric block matrix with $d^2$ blocks. Recall that we partition all indices $[n]$ into $d$ sets,
\begin{align}[n] = V_1 \cup V_2
\cup \ldots \cup V_d
\end{align}
such that $|V_i|=N_i$.  We assume 
$a_{ii}=0, 1\leq i\leq n.$	
and $a_{ij}$, $i\not =j$ are Bernoulli random variables such that if $a_{ij}$ is in the $(k,l)$-th block, $a_{ij}=1$ with probability $p_{kl}$ and $a_{ij}=0$ with probability $1-p_{kl}$. 

Let $\sigma_{kl}^2:= p_{kl}(1-p_{kl})$. Define $p:=\displaystyle\max_{kl} p_{kl}$ and $\sigma^2=p(1-p)$. Assume
\begin{align}
	p=\frac{g(n)\log n}{n},\notag
\end{align} where $\displaystyle\sup_n p<1$ and $g(n)\to \infty$  as $n\to \infty$. We also assume that
\begin{align}
 \frac{N_i}{n}&=\alpha_i+o\left(\frac{1}{g(n)}\right),  \label{SBM3}  \\
 \frac{\sigma_{kl}^2}{\sigma^2}& = c_{kl}+o\left(\frac{1}{g(n)}\right), \label{SBM4}
\end{align}   where  $\alpha_i>0, 1\leq i\leq d$ and $c_{kl}\geq c>0, 1\leq k,l\leq d$ for some constant $c$. The quadratic vector equation becomes
\begin{align}\label{SBMq}
	m(z)&=\sum_{k=1}^d \alpha_k g_{k}(z)\\
-\frac{1}{g_k(z)}&=z+\sum_{l=1}^d\alpha_l c_{kl}g_l(z). \label{SBMQ}
\end{align}
We state the following local law for sparse SBM.

\begin{cor}\label{SSBM}
Let $A_n$ be the adjacency matrix of a stochastic block model with the assumptions above, let $\rho$ be the probability measure corresponding to equation \eqref{SBMq}.
	 For any constant $\delta,C_1>0$, there exists a constant $C_2>0$ such that  with probability at least $1-n^{-C_1}$, the following holds.   For any  bulk interval $I$ of length $|I|\geq  \frac{C_2\log n}{np}$,   the number of eigenvalues $N_I$ of $\frac{A_n}{\sqrt{n\sigma}}$ in $I$ obeys the concentration estimate
	\begin{align} 
	 \left |N_I-n\int_I \rho(x)dx \right|\leq \delta n |I|.	\notag\end{align} \end{cor}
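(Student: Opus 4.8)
The plan is to reduce the statement to Corollary \ref{spars} (the sparse general Wigner-type local law) by exhibiting the centered, rescaled adjacency matrix as a sparse general Wigner-type matrix up to a negligible perturbation, and then to argue that the quadratic vector equation \eqref{SBMQ} for the SBM is the appropriate limit of the general QVE \eqref{gwignerqve}. First I would set $M_n := A_n - \mathbb{E} A_n$, so that $M_n$ has independent mean-zero entries $m_{ij}$ above the diagonal (with $m_{ii} = 0$), and $\mathbb{E}|m_{ij}|^2 = \sigma_{kl}^2$ when $(i,j)$ lies in block $(k,l)$. Writing $m_{ij} = a_{ij} - p_{kl}$, one sees $m_{ij}$ is supported on $\{-p_{kl}, 1-p_{kl}\}$, hence $|m_{ij}| \le 1$; to match the sparse model of Section \ref{SparseRM} I would factor $m_{ij} = \delta_{ij}\xi_{ij}$ where $\delta_{ij}$ is Bernoulli$(p)$ and $\xi_{ij}$ is the conditional variable, so that $|\xi_{ij}| \le 1/\sqrt{\cdots}$ fits the hypothesis $K = o(\sqrt{g(n)})$ with $K$ an absolute constant; here I use $\sup_n p < 1$ so $\sigma^2 = p(1-p) \asymp p$ and the normalizations $\sqrt{np}$ and $\sqrt{n}\sigma$ differ by a bounded factor. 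The variance profile $s_{ij} := \sigma_{kl}^2/\sigma^2$ satisfies $c \le s_{ij} \le 1$ after rescaling, using \eqref{SBM4} and $c_{kl} \ge c$, modulo the $o(1/g(n))$ corrections.

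The second step is the perturbation estimate: $\mathbb{E} A_n$ is a block-constant matrix of rank at most $d$ with entries of size $O(p)$, so $\|\mathbb{E} A_n\|_{op} = O(np)$, and after dividing by $\sqrt{n}\sigma \asymp \sqrt{np}$ the operator norm of the deterministic part is $O(\sqrt{np})$. This is not $o(1)$, so a naive Weyl-type bound is not enough; instead I would invoke the rank bound — since $\mathbb{E}A_n/(\sqrt{n}\sigma)$ has rank $\le d = O(1)$, Cauchy interlacing gives $|N_I(A_n/(\sqrt{n}\sigma)) - N_I(M_n/(\sqrt{n}\sigma))| \le 2d = O(1)$ for any interval $I$, which is absorbed into the $\delta n|I|$ error term since $n|I| \ge C_2 \log n \to \infty$. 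Likewise I would account for the removed diagonal ($a_{ii}=0$ versus the general model's possibly-nonzero diagonal): the diagonal of $\mathbb{E}A_n$ is block-constant of size $O(p)$, again rank $O(1)$ after subtraction, and the fluctuation part has entries $O(1)/\sqrt{np}$ which is $o(\delta^2)$, consistent with the $\xi_{kk}/\sqrt{n}$ bound already used in the proof of Theorem \ref{locallawgwig}.

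The third step is to show $\rho_n$ (from \eqref{gwigner}--\eqref{gwignerqve} with profile $s_{ij} = \sigma_{kl}^2/\sigma^2$) converges to $\rho$ (from \eqref{SBMq}--\eqref{SBMQ}) at a rate fast enough that $n\int_I \rho_n = n\int_I \rho + o(n|I|)$. Here I would use the stability of the QVE from \cite{ajanki2015quadratic} (as already exploited in Lemma \ref{QVEstab} and Lemma \ref{qvelemma1}): the general profile and the piecewise-constant block profile differ in $L^\infty$ by $o(1/g(n)) = o(|I| \cdot n/\log n)^{-1}$-type quantities coming from \eqref{SBM3}, \eqref{SBM4} and from replacing $N_i/n$ by $\alpha_i$; feeding this into the stability bound controls $\sup_k |g_n^{(k)} - g_k|$ on bulk intervals, hence $\sup|m_n - m|$, hence the difference of counting functions via the same Stieltjes-transform-to-ESD lemma (Lemma \ref{control1}) already in hand. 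I expect the main obstacle to be precisely this last step — making the passage from the $n$-dependent profile to the limiting block profile quantitative enough on intervals of length $\Theta(\log n / (np))$, since the $o(1/g(n))$ error in \eqref{SBM3}--\eqref{SBM4} must be shown to beat the scale $\delta$ at which the local law is stated; this is where the precise form of the stability constants from \cite{ajanki2015quadratic} in the bulk (uniform in $n$, depending only on $\varepsilon$) is essential, together with the observation that $g(n)\to\infty$ gives exactly the room needed. Once these three steps are in place, Corollary \ref{SSBM} follows by combining Corollary \ref{spars} applied to $M_n$ with the interlacing correction and the QVE convergence.
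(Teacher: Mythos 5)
Your proposal takes a genuinely different route from the paper: you plan to reduce to the general local law (Theorem~\ref{locallawgwig} via Corollary~\ref{spars}) applied to the centered, rescaled adjacency matrix, and then compare the $n$-dimensional density $\rho_n$ of \eqref{gwigner}--\eqref{gwignerqve} to the $d$-dimensional one $\rho$ of \eqref{SBMq}--\eqref{SBMQ}. The paper instead re-runs the resolvent argument at the level of block-averaged Stieltjes transforms $f_n^{(l)}$, derives the $d$-dimensional perturbed QVE \eqref{perturb} directly, and invokes the stability theorem for the fixed $d$-dimensional system \eqref{SBMQ}. The paper's route is cleaner because it never introduces an $n$-dependent intermediate density; yours is modular (uses the general theorem as a black box) but therefore needs the extra comparison step you correctly flag. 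That extra step can be made to work: for a block-constant variance profile the $n$-dimensional QVE solution $g_n^{(k)}$ is constant on each block, so it reduces to a $d$-dimensional QVE with coefficients $\frac{N_l}{n}\frac{\sigma_{kl}^2}{\sigma^2}$, and these differ from $\alpha_l c_{kl}$ by $o(1/g(n))=o(1)$ which feeds into the bulk stability bound exactly as you describe.

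There are, however, two concrete gaps. First, the factorization $m_{ij}=\delta_{ij}\xi_{ij}$ with $\delta_{ij}\sim\mathrm{Bernoulli}(p)$ cannot hold: the centered entry $a_{ij}-p_{kl}$ equals $-p_{kl}\neq 0$ on the complement event, so it is nonzero almost surely, whereas a $\mathrm{Bernoulli}(p)$ mask forces the product to vanish with probability $1-p$. You do not actually need the product structure: apply Theorem~\ref{locallawgwig} directly to $H_n=(A_n-\mathbb E\tilde A_n)/\sigma$ with $K=1/\sigma=O(1/\sqrt{p})=o(\sqrt{n/\log n})$, which is precisely how Corollary~\ref{spars} is proved internally, and this gives the interval scale $K^2\log n/n=\log n/(np)$.

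Second, and more seriously, your interlacing step uses that $\mathbb E A_n/(\sqrt n\,\sigma)$ has rank $\le d$, but $\mathbb E A_n$ has \emph{zero diagonal}, so it equals the rank-$d$ block-constant matrix $\mathbb E\tilde A_n$ minus the full-rank diagonal $\mathrm{diag}(p_{kk})$; $\mathbb E A_n$ itself is not low-rank, and Cauchy interlacing gives nothing. The diagonal correction $\mathrm{diag}(p_{kk})/(\sqrt n\,\sigma)$ has operator norm $\asymp\sqrt{p/n}$, which does \emph{not} beat the interval length $\eta\asymp\log n/(np)$ once $p\gg(\log n)^{2/3}/n^{1/3}$, so a Weyl bound fails as well. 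The paper's fix is to center by $\mathbb E\tilde A_n$ (which is genuinely rank $\le d$), leaving a small deterministic diagonal $-p_{kk}/(\sqrt n\,\sigma)=O(\sqrt{p/n})=o(1)$ that is harmless because it enters the Schur complement only as the $\xi_{kk}/\sqrt n$ term, not as a spectral perturbation; you need this $\tilde A_n$ device (or an equivalent separate treatment of the diagonal inside the resolvent argument) for the reduction to go through.
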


	\begin{proof} 

We have the following well-known Cauchy Interlacing Lemma, appearing for example, as Lemma 36 from \cite{tao2011random}.
\begin{lemma}\label{cauchy}
Let $A,B$ be symmetric matrices with the same size and $B$ has rank $1$. Then for any interval $I$, we have \begin{align}|N_I(A+B)-N_I(B)|\leq 1.\end{align} 
where $N_I(M)$ is the number of eigenvalues of $M$ in $I$.
\end{lemma}
Let $\tilde{A}_n$ be the matrix whose off diagonal entries are equal to $A_n$ and 
\begin{align}\label{tilde}
\tilde{a}_{ii}=p_{kk} 	
\end{align} if $(i,i)$ is in the $k$-th block.

From Lemma \ref{cauchy}, since rank $\mathbb E(\tilde{A}_n)=d$, we have 
\begin{align}|N_I(A_n)-N_I(A_n-\mathbb E(\tilde{A	}_n))|\leq d=o(n|I|).\notag\end{align} 
Therefore it suffices to prove the local law for 
\begin{align}W_n=\frac{A_n-\mathbb E\tilde{A}_n}{\sqrt n\sigma}.\notag\end{align}
 Let $\frac{A_n-\mathbb E\tilde{A}_n}{\sigma}=(\xi_{ij})_{1\leq i,j\leq n}$.
	By Schur's complement, we can write the Stieltjes transform of the empirical measure $s_n(z)$ in the following way,
 \begin{align}
s_n(z)=\frac{1}{n}\sum_{k=1}^n\frac{1}{-\frac{\xi_{kk}}{\sqrt{n}} -z-Y_k}\notag
            \end{align}
We do the following partition of $s_n(z)$ into $d$ parts:
\begin{align}
	s_n(z):=\sum_{l=1}^d \frac{N_l}{n}f_n^{(l)}(z),\notag
\end{align}
where
\begin{align}\label{fln}
 f_n^{(l)}(z):=\frac{1}{N_l}\sum_{k\in V_l}\frac{1}{-\frac{\xi_{kk}}{\sqrt{n}}-z-Y_k}.
\end{align}
The $k$-th diagonal element in $\frac{A_n-\mathbb E\tilde{A_n}}{\sigma}$ is $\displaystyle\frac{-p_{kk}}{\sqrt{n}\sigma}=o(1).$
Similar with  \eqref{stochasticqve},  we have
\begin{align}\label{stochasticf}
-\frac{1}{f_n^{(l)}(z)}=\sum_{m=1}^{d}\frac{N_{m}}{n}c_{ml}f_{n}^{(m)}(z)+z+o(1), \quad 1\leq l\leq d
\end{align}
for any $z\in D_{n,\varepsilon}$
with probability at least $1-n^{-C-9}$.
Using the assumptions \eqref{SBM3}, \eqref{SBM4} and the fact that $|f_n^{(l)}|\leq \frac{1}{\eta}$,  we have
\begin{align}
-\frac{1}{f_n^{(l)}(z)}=z+\sum_{m=1}^{d}\alpha_m c_{ml}f_{n}^{(m)}(z)+o(1), \quad 1\leq l\leq d\label{perturb}
\end{align} for any fixed $z\in D_{n,\varepsilon} $ with probability at least $1-n^{-C-9}$.

Since $d$ is fixed and all coefficients $c_{kl}, 1\leq k,l\leq d$ in \eqref{SBMQ} are positive and bounded, from Theorem 2.10 in \cite{ajanki2015quadratic},
\begin{align}
\sup_{1\leq i\leq d}|g_i(z)|<\infty, \forall z\in \mathbb H.	\notag
\end{align}
Theorem 2.12 (i) in \cite{ajanki2015quadratic} implies Lemma \ref{QVEstab} holds with $\Lambda(z):=\displaystyle \sup_{1\leq i\leq d}|f_n^{(i)}(z)-g_i(z)|$ for any fixed $z\in D_{n,\varepsilon}$. Similar to the proof of Lemma \ref{qvelemma1}, we have 
\begin{align*}
|s_n(z)-m(z)|&=|\sum_{l=1}^d \frac{N_l}{n}f_n^{(l)}(z)-\sum_{l=1}^d \alpha_l g_l(z)|\\
&\leq |\sum_{l=1}^d \frac{N_l}{n}f_n^{(l)}(z)-\sum_{l=1}^d\alpha_l f_n^{(l)}(z)|+|\sum_{l=1}^d\alpha_l f_n^{(l)}(z)-\sum_{l=1}^d \alpha_l g_l(z)|\\
             &\leq \sum_{l=1}^d |(\frac{N_l}{n}-\alpha_l)f_n^{(l)}(z)|+\sum_{l=1}^d \alpha_l|f_n^{(l)}(z)- g_l(z)|=o(1)
\end{align*}
uniformly for all $z\in D_{n,\varepsilon}$ with probability at least $1-n^{-C}$. Hence the local law for  $\frac{A_n}{\sqrt {n}\sigma}$ is proved.
	\end{proof}

We have the corresponding infinity norm bound for eigenvectors in the bulk.
\begin{cor} \label{coreigenv}
Let $A_n$ be an adjacency matrix of a stochastic block model. For any bulk interval $I$ such that eigenvalue $\lambda_i(\frac{A_n}{\sqrt n\sigma })\in I$ and any constant $C_1>0$, with  probability at least $1-n^{-C_1}$, the corresponding unit eigenvector $u_i(\frac{A_n}{\sqrt n\sigma})$ satisfies 
\begin{align}\left\|u_i\left(\frac{A_n}{\sqrt n\sigma}\right)\right\|_{\infty}\leq \frac{C_2\sqrt{\log n}}{\sqrt {np}}.	\notag
\end{align}  for some constant $C_2>0$.
\end{cor}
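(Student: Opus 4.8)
The plan is to reduce Corollary \ref{coreigenv} to the eigenvector delocalization bound already established for general Wigner-type matrices, in the same way that Corollary \ref{delocspars} is deduced from Theorem \ref{deloc}, but with the extra bookkeeping needed to pass from $A_n$ to a centered, scaled matrix that satisfies the hypotheses of Section \ref{GWM}. First I would observe that subtracting $\mathbb E\tilde{A}_n$, the block-constant rank-$d$ matrix defined in \eqref{tilde}, does not change the eigenvectors enough to matter: actually it does change them, so instead I would work directly with $W_n = (A_n - \mathbb E\tilde A_n)/(\sqrt n \sigma)$ and argue that the delocalization statement for $A_n/(\sqrt n \sigma)$ follows by a low-rank perturbation argument. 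Concretely, since $\mathbb E\tilde A_n$ has rank $d$ with $d$ fixed, the eigenvectors of $A_n/(\sqrt n\sigma)$ associated to bulk eigenvalues differ from those of $W_n$ only through a rank-$d$ correction, and a standard resolvent/interlacing argument (of the type in Lemma \ref{lemm29} combined with Lemma \ref{cauchy}) shows the $\ell^\infty$ norm changes by at most a $d$-dependent constant factor, which is absorbed into $C_2$.

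Next I would verify that $W_n = (A_n-\mathbb E\tilde A_n)/(\sqrt n\sigma)$, after rescaling, is exactly the object treated by Corollary \ref{delocspars}. Writing $H_n := (A_n - \mathbb E\tilde A_n)/\sigma$ we have $\mathbb E (H_n)_{ij}=0$ off the diagonal, $\mathbb E|(H_n)_{ij}|^2 = \sigma_{kl}^2/\sigma^2 = c_{kl}+o(1/g(n))$ which lies in $[c', 1]$ for large $n$ by \eqref{SBM4}, and $|(H_n)_{ij}|\le 1/\sigma = O(1/\sqrt p)$ almost surely, so the entries have the form (bounded)$\times$(Bernoulli$(p)$) with $p = g(n)\log n/n$; this is precisely the sparse general Wigner-type structure of Section \ref{SparseRM} with $K = O(1)$. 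The diagonal entries $-p_{kk}/\sigma = o(1)$ are harmless, exactly as in the proof of Corollary \ref{SSBM} where the same quantity was shown to be $o(1)$. Therefore Corollary \ref{delocspars} applies to $A_n/(\sqrt n\sigma)$ (up to the rank-$d$ shift handled above) with $W_n = H_n/\sqrt{np}$ in its notation, and it yields $\|u_i\|_\infty \le C_2 K \log^{1/2} n/\sqrt{np} = C_2'\sqrt{\log n}/\sqrt{np}$ since $K=O(1)$ here.

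Finally I would record that Corollary \ref{delocspars} itself rests on Corollary \ref{spars}, which is the sparse specialization of the local law Theorem \ref{locallawgwig}; so the chain is Theorem \ref{locallawgwig} $\Rightarrow$ Corollary \ref{spars} $\Rightarrow$ Corollary \ref{delocspars}, applied to $H_n/\sqrt{np}$, together with the rank-$d$ perturbation passage from $A_n - \mathbb E\tilde A_n$ to $A_n$. A union bound over $1\le i\le n$ upgrades the per-eigenvector estimate to a simultaneous bound for all bulk eigenvectors at the stated probability $1-n^{-C_1}$, after adjusting the exponent from $C_1+O(1)$ to $C_1$ by choosing constants appropriately, exactly as in the proof of Theorem \ref{deloc}.

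The main obstacle is the passage from $A_n$ to the centered matrix: one must check that replacing $A_n$ by $A_n - \mathbb E\tilde A_n$ genuinely costs only a bounded factor in the eigenvector sup-norm and not more. The clean way is to note that $\mathbb E\tilde A_n$ is block-constant, hence its range is spanned by the $d$ (normalized) indicator vectors $\mathbf 1_{V_k}/\sqrt{N_k}$, each of which is itself delocalized with $\ell^\infty$ norm $O(1/\sqrt n)$; writing the eigenvector equation for $A_n/(\sqrt n\sigma)$ and projecting off this $d$-dimensional space via the Schur-complement identity of Lemma \ref{lemm29} shows any bulk eigenvector of $A_n/(\sqrt n\sigma)$ has $\ell^\infty$ norm controlled by that of a corresponding bulk eigenvector of $W_n$ plus $O(\sqrt d/\sqrt n)$. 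All other steps are routine verifications that the hypotheses of the already-proven corollaries hold with $K=O(1)$.
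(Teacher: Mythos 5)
Your proposal and the paper diverge at exactly the one step that actually needs a new idea, and that step is where your argument has a genuine gap. You propose to prove delocalization for the centered matrix $W_n = (A_n - \mathbb E\tilde A_n)/(\sqrt n\sigma)$ via Corollary \ref{delocspars}, and then transfer the sup-norm bound back to $A_n/(\sqrt n\sigma)$ via a ``low-rank perturbation argument.'' But rank-$d$ perturbations do not control eigenvector $\ell^\infty$ norms in any useful generic sense: Cauchy interlacing (Lemma \ref{cauchy}) bounds the change in eigenvalue \emph{counts}, not the change in eigenvectors, and Lemma \ref{lemm29} is a Schur-complement identity relating a coordinate $x_k$ of an eigenvector of an $n\times n$ matrix to data of the $(n-1)\times(n-1)$ principal minor --- it is not a perturbation lemma and gives no handle on ``projecting off'' the $d$-dimensional range of $\mathbb E\tilde A_n$. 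Moreover, bulk eigenvectors of $A_n/(\sqrt n\sigma)$ and of $W_n$ do not come in matched pairs, so the phrase ``a corresponding bulk eigenvector of $W_n$'' has no meaning that would let the claimed $O(\sqrt d/\sqrt n)$ correction close the argument. You flag some unease yourself (``actually it does change them''), and that unease is warranted: this transfer step is not a routine verification.

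The paper's route is different and avoids the problem entirely. It works directly with $W_n=A_n/(\sqrt n\sigma)$ (no centering of the matrix at all for the eigenvector step), uses Corollary \ref{SSBM} only to guarantee $N_I\gtrsim n|I|$ in a short bulk interval around $\lambda_i$, and then applies Lemma \ref{lemm29} as in the proof of Theorem \ref{deloc}. The only new obstruction is that the column vector $X_k$ (the $k$-th column of $A_n/\sigma$ with the $k$-th entry removed) is \emph{not centered}, so Lemma \ref{same} does not apply to it directly. The fix is local and cheap: set $H_1 = H\cap H_2$ where $H_2$ is the hyperplane orthogonal to the single deterministic vector $\mathbb E a_k$, note $\dim H_1\ge |J|-1$, write $a_k=b_k+\mathbb E a_k$ with $b_k$ centered, observe $\pi_{H_1}(a_k)=\pi_{H_1}(b_k)$ because $\mathbb E a_k\perp H_1$, and apply Lemma \ref{same} to $b_k$ to get $\|\pi_{H_1}(b_k)\|^2=\Omega(|J|/n)$, hence $\|\pi_H(X_k)\|^2=\Omega(|J|)$. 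Note that the naive triangle inequality $\|\pi_H a_k\|\ge\|\pi_H b_k\|-\|\mathbb E a_k\|$ would not work here since $\|\mathbb E a_k\|$ can be of order $\sqrt p$, which can dominate $\sqrt{|J|/n}$ when $p$ is not tiny; the codimension-one projection is essential, and it is the one ingredient your proposal is missing.

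Everything else in your outline --- invoking Corollary \ref{SSBM} for the local-law input, noting that the centered and rescaled entries are $O(1/\sigma)=O(1/\sqrt p)$-bounded with variances in $[c,1]$, the union bound over $i$ and adjusting the exponent --- matches the paper's intent and is fine. Replace the rank-$d$ eigenvector-transfer step with the codimension-one projection argument against $\mathbb E a_k$ and the proof goes through.
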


\begin{proof} Let $W_n:=\frac{A_n}{\sqrt n \sigma}$. For any $\lambda_i(W_n)$ in the bulk, by Corollary \ref{SSBM}, one can find an interval $I$ centered at $\lambda_i(W_n)$ and $|I|=\frac{C_2\log n}{np}$ such that $N_I\geq \delta_1n |I|$ for some small $\delta_1>0$ with probability at least $1-n^{-C_1-3}$. We can find a set $J\subset \{1,\dots, n-1\}$ with $|J|\geq N_I/2$ such that $|\lambda_j(W_{n-1})-\lambda_i(W_n)|\leq |I|$ for all $j\in J$.  Let $X_k$ be the $k$-th column of $\frac{A_n}{\sigma}$ with the $k$-th entry removed, then $X_k=\sqrt n a_k$.

Since $X_k$ is not centered, we need  to show
\begin{align} \label{83}
\sum_{j\in J}|u_j(W_{n,k})^*X_k|^2=\|\pi_H(X_k)\|^2=\Omega(|J|)
\end{align} with probability at least $1-n^{-C_1-3}$, where $H$ is the subspace spanned by all orthonormal eigenvectors associated to eigenvalues $\lambda_j(W_{n,k}), j\in J$ and $\dim (H)=|J|$.

Let $H_1=H\cap H_2$, where $H_2$ is the subspace orthogonal to the vector $\mathbb Ea_k$. The dimension of $H_1$ is at least $|J|-1$. Let $b_k=a_k-\mathbb Ea_k$, then the entries of $b_k$ are centered with the same variances as $a_k$. By Lemma \ref{same}, we have 
\begin{align*}\|\pi_{H_1}(b_k)\|^2=\Omega\left(\frac{|J|}{n}\right)
\end{align*}  
with probability at least $1-n^{-C_1-3}$. Moreover,
\begin{align}\|\pi_{H}(a_k)\|=\|\pi_H(b_k+\mathbb Ea_k)\|\geq \|\pi_{H_1}(b_k+\mathbb Ea_k)\|=\|\pi_{H_1}(b_k)\|,\notag
\end{align}
which implies \eqref{83} holds.	The rest of the proof follows from the proof of Theorem \ref{deloc}.
\end{proof}

\subsubsection{Unbounded Number of Classes}
For the Stochastic Block Models,  if we allow the number of classes $d\to\infty$ as $n\to\infty$, a local law can be proved under the following assumptions
\begin{align} 
 d&=o\left(\frac{n}{g(n)}\right)\label{sum},\\
	 \sum_{i=1}^{d}\left|\frac{\sigma_{kl}^2}{\sigma^2}-c_{kl}\right| &=o\left(\frac{1}{g(n)}\right).\label{515}
\end{align}

We will compare the Stieltjes transform of the empirical spectral distribution to the measure whose Stieltjes transform satisfies the following equations:
\begin{align}
m_n(z)&=\sum_{i=1}^d \frac{N_i}{n} g_{n,i}(z)\label{qveint}\\
-\frac{1}{g_{n,i}(z)}&=z+\sum_{i=1}^d\frac{N_i}{n} c_{ij}g_{n,j}(z). 	\label{qveint2}
\end{align}

We have the following local law for SBM with unbounded number of blocks.
 \begin{cor}\label{corincrease}
Let $A_n$ be an adjacency matrix of SBM with assumptions \eqref{sum},\eqref{515}. Let $\rho_n$ be the probability measure corresponding to equations \eqref{qveint},\eqref{qveint2}. For any constants $\delta,C_1>0$, there exists a constant $C_2$ such that with probability at least $1-n^{-C_1}$ the following holds.
	 For any  bulk interval $I$ of length $\displaystyle |I|\geq  \frac{C_2\log n}{np}$,    the number of eigenvalues $N_I$ of $\displaystyle \frac{A_n}{\sqrt{n}\sigma}$ in $I$ obeys the concentration estimate
	\begin{align} 
	 \left |N_I-n\int_I \rho_n(x)dx \right|\leq \delta n |I|.	\end{align} \notag
	\end{cor}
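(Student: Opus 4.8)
The plan is to follow the same template used for the finite-block case in Corollary \ref{SSBM}, adapting the bookkeeping so that the error terms survive when $d\to\infty$ under the quantitative hypotheses \eqref{sum} and \eqref{515}. First I would apply the rank-reduction step: let $\tilde A_n$ be the matrix obtained from $A_n$ by inserting $\tilde a_{ii}=p_{kk}$ on the diagonal blocks, so that $\operatorname{rank}\mathbb E\tilde A_n=d$. By Lemma \ref{cauchy} applied $d$ times, $|N_I(A_n)-N_I(A_n-\mathbb E\tilde A_n)|\le d=o(n|I|)$, the last bound using \eqref{sum} together with $|I|\ge C_2\log n/(np)=C_2 g(n)\log^2 n/(n^2 p)\cdot\frac{n p}{\log n}$; more simply, $n|I|\ge C_2\log n/p = C_2 n/g(n)$, and $d=o(n/g(n))$, so indeed $d=o(n|I|)$. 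Hence it suffices to prove the local law for $W_n:=(A_n-\mathbb E\tilde A_n)/(\sqrt n\,\sigma)$, whose entries $\xi_{ij}=(a_{ij}-\mathbb E\tilde a_{ij})/\sigma$ are mean zero with variances $s_{ij}=\sigma_{kl}^2/\sigma^2$ when $(i,j)$ is in block $(k,l)$, satisfy $s_{ij}\le 1$ and $s_{ij}\ge c$ (using $c_{kl}\ge c$ plus \eqref{515} for large $n$), and are bounded by $|\xi_{ij}|\le 1/\sigma = O(1/\sqrt p)=o(\sqrt{n/\log n})$.

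Next I would run the Schur-complement expansion exactly as in \eqref{strans}: writing $s_n(z)=\frac1n\sum_k f_n^{(k)}(z)$ with $f_n^{(k)}$ as in \eqref{fqve}, and partitioning into blocks, $s_n(z)=\sum_{l=1}^d\frac{N_l}{n} f_n^{(l)}(z)$ where $f_n^{(l)}$ is the block average from \eqref{fln}. The diagonal contribution $\xi_{kk}/\sqrt n=-p_{kk}/(\sqrt n\,\sigma)=o(\delta^2)$ is negligible. Lemmas \ref{girkoq} and \ref{weightedprojectionq} — which only use $c\le s_{ij}\le 1$ and the boundedness constant $K=1/\sigma$, not the number of blocks — give the perturbed equation $f_n^{(k)}(z)+\bigl(\frac1n\sum_l s_{kl}f_n^{(l)}(z)+z+o(\delta^2)\bigr)^{-1}=0$ with probability $1-n^{-C-9}$ for fixed $z$. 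Averaging over $k\in V_l$ and using that $s_{kl}$ depends only on the block labels, $\frac1n\sum_{l'}s_{kl'}f_n^{(l')}(z)=\sum_{m=1}^d\frac{N_m}{n}\frac{\sigma_{ml}^2}{\sigma^2}f_n^{(m)}(z)$, and then \eqref{515} lets me replace $\sigma_{ml}^2/\sigma^2$ by $c_{ml}$: the total error is $\sum_m\frac{N_m}{n}\bigl|\frac{\sigma_{ml}^2}{\sigma^2}-c_{ml}\bigr|\,|f_n^{(m)}(z)|\le \frac{1}{\eta}\sum_m|\frac{\sigma_{ml}^2}{\sigma^2}-c_{ml}|=\frac{1}{\eta}\,o(1/g(n))$, and since $\eta\ge C_3^2 K^2\log n/(n\delta^6)=C_3^2\log n/(n\sigma^2\delta^6)\asymp 1/(g(n)\log n)\cdot\frac{\log n}{\delta^6}\asymp \delta^{-6}/g(n)$, this quotient is $o(1)$. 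So with probability $1-n^{-C-9}$, for fixed $z\in D_{n,\varepsilon}$, $-1/f_n^{(l)}(z)=z+\sum_{m=1}^d\frac{N_m}{n}c_{ml}f_n^{(m)}(z)+o(\delta^2)$ for all $1\le l\le d$, which is exactly the perturbed form of \eqref{qveint2}.

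Then I would invoke the stability of the quadratic vector equation \eqref{qveint2}. One must check that the hypotheses of Theorem 2.12 in \cite{ajanki2015quadratic} hold for the operator with kernel $\frac{N_j}{n}c_{ij}$: the entries lie in $[c\alpha_{\min},1]$ after normalization and, crucially, from $\rho_n(\operatorname{Re}z)\ge\varepsilon$ on the bulk interval the same argument as in Lemma \ref{QVEstab} (Lemma 5.4(i) in \cite{ajanki2015quadratic}) gives $\sup_i|g_{n,i}(z)|\le 1/\varepsilon$, uniformly in $n$. This yields, for fixed $z$, $\Lambda_n(z)\mathbf 1\{\Lambda_n(z)\le\lambda\}\le C_5\|d_n(z)\|_\infty=o(\delta^2)$ where $\Lambda_n(z):=\sup_{1\le i\le d}|f_n^{(i)}(z)-g_{n,i}(z)|$. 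I then run the continuity/net argument verbatim from the proof of Lemma \ref{qvelemma1}: both $f_n^{(i)}(z)$ and $g_{n,i}(z)$ are $n^2$-Lipschitz in $z$ on $D_{n,\varepsilon}$ (the $f$-bound from \eqref{fqve} as in that proof, the $g$-bound since each $g_{n,i}$ is a Stieltjes transform), a grid $L_n$ of $n^4$ points per vertical line times $n^3$ lines covers $D_{n,\varepsilon}$, the indicator is removed using $\Lambda_n(z_n)\le 2/\operatorname{Im}(z_n)\le 2/n$ at the top of each line, and a union bound upgrades the probability to $1-n^{-C-2}$, uniformly for $z\in D_{n,\varepsilon}$. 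Finally I convert back to $s_n$ versus $m_n$: $|s_n(z)-m_n(z)|=\bigl|\sum_l\frac{N_l}{n}(f_n^{(l)}(z)-g_{n,l}(z))\bigr|\le\sum_l\frac{N_l}{n}\Lambda_n(z)=\Lambda_n(z)=o(\delta^2)$ uniformly on $D_{n,\varepsilon}$ with probability $1-n^{-C}$, and Lemma \ref{control1} then delivers $|N_I-n\int_I\rho_n|\le\delta n|I|$ for $|I|\ge C_2\log n/(np)$.

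The main obstacle I anticipate is not any single step but the uniform-in-$d$ control of the constants: every error term that was harmless when $d=O(1)$ must now be shown to be $o(1)$ against $\eta\asymp\delta^{-6}/g(n)$, and this is exactly where \eqref{sum} and \eqref{515} are designed to be used — the first to kill the rank-$d$ defect $d=o(n|I|)$, the second (in $\ell^1$ rather than $\ell^\infty$ form) to control the accumulated block-variance mismatch $\sum_i|\sigma_{kl}^2/\sigma^2-c_{kl}|$. The delicate point is that the stability constant $C_5$ and the threshold $\lambda$ in Theorem 2.12 of \cite{ajanki2015quadratic} must depend only on $\varepsilon$ and the bounds $c,1$ on the normalized kernel entries, not on $d$; this is why the hypothesis $c_{kl}\ge c>0$ (uniformly) is essential, and I would verify that the cited theorem is genuinely dimension-free in this sense. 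The remaining non-uniformity — that convergence to the limiting global law is not uniform, so the prediction must be made with $\rho_n$ (the $n$-step approximant defined by \eqref{qveint},\eqref{qveint2}) rather than a fixed $\rho$ — is sidestepped precisely by phrasing the conclusion in terms of $\rho_n$, exactly as in Theorem \ref{locallawgwig}.
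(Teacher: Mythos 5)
Your proposal is correct and follows essentially the same route as the paper: Cauchy interlacing to absorb the rank-$d$ mean correction via \eqref{sum} (checking $d=o(n|I|)$ exactly as you do), then rerunning the Corollary \ref{SSBM} argument with the $\ell^1$ hypothesis \eqref{515} controlling the accumulated block-variance mismatch against $\eta\asymp\delta^{-6}/g(n)$. You actually spell out more than the paper does — in particular your observation that the stability bound should be obtained as in Lemma \ref{QVEstab} via $\rho_n\ge\varepsilon$ so that the constants $C_5,\lambda$ are manifestly $d$-free is a genuine detail the paper's terse proof leaves implicit.
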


\begin{proof}
Since $d=o\left(\frac{n}{g(n)}\right)$, recall the definition of $\tilde{A}_n$ from \eqref{tilde}, by Cauchy interlacing law,
\begin{align}|N_I(A_n)-N_I(A_n-\mathbb E(\tilde{A	}_n))|\leq d=o(n|I|).\notag
\end{align} 
 It suffices to prove the statement for the centered matrix
$\displaystyle W_n:=\frac{A_n-\mathbb E\tilde{A}_n}{\sqrt n\sigma}.$
The proof then follows from Corollary \ref{SSBM} with assumption \eqref{515}.

\end{proof}
\begin{remark}
	Different from Corollary \ref{SSBM}, in Corollary \ref{corincrease}, we are not comparing the empirical spectral distribution to a limiting spectral distribution $\rho$ independent of $n$. If we assume $ \displaystyle \frac{N_i}{n}\to\alpha_i, \alpha_1\geq \alpha_2\cdots \geq\cdots ,$ and $\displaystyle \sum_{i=1}^{\infty}\alpha_i=1$, one can show that $\rho_n$ converge to some $\rho$ (see Section 7 in \cite{zhu2018graphon} for further details).
But we don't have a local law comparing $N_I$ with $\displaystyle n\int_{I}\rho(x) dx$. In fact, let $S_n$ be the symmetric function on $[0,1]^2$ representing the variance profile as in \eqref{varianceprofile} and $S$ be its point-wise limit,  there is no upper bound for rate of convergence  on  $\displaystyle \sup_{x,y}|S_n(x,y)-S(x,y)|$. 
\end{remark}
\begin{remark}
With the same argument in the proof of Corollary \ref{coreigenv}, the infinity norm bound for eigenvectors in Corollary \ref{coreigenv} still holds for the SBM with unbounded number of classes.
\end{remark}

% If in two-column mode, this environment will change to single-column format so that long equations can be displayed. 
% Use only when necessary.
%\begin{widetext}
%$$\mbox{put long equation here}$$
%\end{widetext}

% Figures should be put into the text as floats. 
% Use the graphics or graphicx packages (distributed with LaTeX2e).
% See the LaTeX Graphics Companion by Michel Goosens, Sebastian Rahtz, and Frank Mittelbach for examples. 
%
% Here is an example of the general form of a figure:
% Fill in the caption in the braces of the \caption{} command. 
% Put the label that you will use with \ref{} command in the braces of the \label{} command.
%
% \begin{figure}
% \includegraphics{}%
% \caption{\label{}}%
% \end{figure}

% Tables may be be put in the text as floats.
% Here is an example of the general form of a table:
% Fill in the caption in the braces of the \caption{} command. Put the label
% that you will use with \ref{} command in the braces of the \label{} command.
% Insert the column specifiers (l, r, c, d, etc.) in the empty braces of the
% \begin{tabular}{} command.
%
% \begin{table}
% \caption{\label{} }
% \begin{tabular}{}
% \end{tabular}
% \end{table}

% If you have acknowledgments, this puts in the proper section head.
\section*{acknowledgments}
 The authors would like to thank PCMI Summer Session 2017 on Random Matrices, during which a part of this work was done. This work was supported by NSF DMS-1712630.

% Create the reference section using BibTeX:

\bibliographystyle{plain}
\bibliography{ref.bib}

\end{document}